\theoremstyle{plain}
  \newtheorem{thm}{Theorem}[section]
  \newtheorem{prop}[thm]{Proposition}
  \newtheorem{lem}[thm]{Lemma}
  \newtheorem{cor}[thm]{Corollary}
\theoremstyle{definition}
  \newtheorem{dfn}[thm]{Definition}
\theoremstyle{remark}
  \newtheorem{rem}[thm]{Remark}
\let\opn\operatorname 
\let\term\emph
\def\@bothmode#1{\ifmmode #1\else $#1$\fi}
\def\@chCount#1{%
   \@tempchn=0
   \@tfor\member:=#1\do{\advance\@tempchn by 1}%
}
\def\@autopr#1{%
   \@chCount{#1}%
   \ifnum\@tempchn<2 #1\else (#1)\fi
}
\let\@tempopn\relax 
\def\@opform_#1#2{\@tempopn_{#1}\@autopr{#2}}
\numberwithin{equation}{section}
\def\NN{\mathbb{N}} 
\def\ZZ{\mathbb{Z}} 
\def\RR{\mathbb{R}} 
\def\kk{\Bbbk} 
\def\m{\ideal{m}} 
\def\p{\ideal{p}} 
\let\s\sigma 
\let\t\tau 
\let\i\iota 
\def\MM{\mathcal M} 
\def\M{\mathbb M} 
\let\@tempar\relax 
\def\@seton^#1{\overset{#1}{\@tempar}}
\def\defar#1#2{\@xp\def\csname #1\endcsname{\def\@tempar{#2}\@ifnextchar^{\@seton}{\@tempar}}}
\def\imply{\@bothmode{\Rightarrow}} 
\def\Imply{\@bothmode{\Longrightarrow}} 
\def\iff{\@bothmode\Longleftrightarrow} 
\def\get{\@bothmode{\Leftarrow}} 
\def\Get{\@bothmode{\Longleftarrow}} 
\def\mbra#1{\{ #1\}} 
\def\set#1#2{\mbra{\,#1\mid #2\,}} 
\let\Dsum\bigoplus 
\let\tns\otimes 
\def\supp{\opn{supp}} 
\def\bc{a} 
\def\op{\mathsf{op}} 
\def\chara{\operatorname{char}} 
\def\defopn#1{%
    \@xp\def\csname #1\endcsname{%
        \def\@tempopn{\opn{\csname the#1\endcsname}}%
        \@ifnextchar_{\@opform}{\@opform_{}}%
    }%
}
\def\idmap{\opn{id}} 
\def\the@init{in}
\def\init{\@init_{\succ}}
\let\ideal\mathfrak 
\def\theE{E}
\def\E{\@ifstar{{}^*\theE}{\theE}} 
\let\defcat\defopn
\def\theMod{Mod}    \def\themod{mod}
\def\theMod{Mod}    \def\themod{mod}
\let\the@Lgr\theMod  \let\the@lgr\themod
\def\Lgr#1{\@Lgr_{\ZZ\MM}{#1}} 
\def\lgr#1{\@lgr_{\ZZ\MM}{#1}} 
\let\colimit\varinjlim 
\def\theHom{Hom}    \def\theRHom{RHom}
\def\theExt{Ext}    
\def\theD{D}
\let\@tempgrop\underline
\def\Hom{\@ifstar{\opn{\@tempgrop\theHom}}{\opn\theHom}} 
\def\RHom{\@ifstar{\opn{R\@tempgrop\theHom}}{\opn\theRHom}} 
\def\Ext{\@ifstar{\opn{\@tempgrop\theExt}}{\opn\theExt}} 
\def\uExt{\underline{\operatorname{Ext}}}
\def\uHom{\underline{\Hom}}
\def\RuHom{{\rm R}\uHom}
\def\theDcat{{\mathsf D}}
\def\Db{\theDcat^b} 
\def\@G_#1{\Gamma_{#1}}
\def\G{\@ifnextchar_{\@G}{\@G_\m}} 
\def\DD{\mathbb D} 
\def\DDD{\mathbf D}
\def\TT{\mathbf T}
\def\theHtcat{{\mathsf K}}
\def\Cb{\theHtcat^b} 
\def\for{{\mathbb U}} 
\def\depth{\operatorname{depth}}
\def\rank{\operatorname{rank}}
\def\hght{\operatorname{ht}}
\def\cpx#1{#1^{\bullet}} 
\def\theD{D} 
\def\D{\@ifstar{{}^*\!\theD}{\theD}} 
\def\Sh{\operatorname{Sh}} 
\def\cF{{\mathcal F}} 
\def\sp{\operatorname{Sp\acute{e}}} 
\def\RcHom{{\rm R}\operatorname{\mathcal Hom}} 
\def\cDx{\cpx {\mathcal D}_X} 
\def\cDu{\cpx {\mathcal D}_U} 
\def\const{\underline{\kk} } 
\def\cH{\mathcal H} 
\def\rH{\tilde{H}} 
\def\<{{\langle}}
\def\>{{\rangle}}
\def\ba{\mathbf a}
\def\bb{\mathbf b}
\def\bc{\mathbf c}
\def\be{\mathbf e}
\def\b0{\mathbf 0}
\def\11{\mathbf 1}
\def\ua{{\underline{\ba}}}
\def\ub{{\underline{\bb}}}
\def\uc{{\underline{\bc}}}
\def\sm{{\sf m}}
\title[Face ring of a simplicial poset]
{Dualizing complex of the face ring \\ of a simplicial poset}
\author{Kohji Yanagawa}
\thanks{Partially supported by Grant-in-Aid for Scientific Research (c) (no.19540028).}
\address{Department of Mathematics, Kansai University,
Suita 564-8680, Japan}
\email{yanagawa@ipcku.kansai-u.ac.jp}
\subjclass[2000]{Primary 13F55; Secondary 13D09}
\begin{document}
%
%
\maketitle

\begin{abstract}
A finite poset $P$ is called {\it simplicial}, if it has 
the smallest element 
$\hat{0}$, and every interval $[\hat{0},  x]$ is a boolean algebra.  
The face poset of a simplicial complex is a typical example.  
Generalizing the Stanley-Reisner ring of a simplicial complex, 
Stanley assigned the graded ring $A_P$ to $P$. 
This ring has been studied from both combinatorial and 
topological perspective. In this paper,  we will give a concise description of 
a dualizing complex of $A_P$, which has many applications.    
\end{abstract}

\section{Introduction}
All posets  (partially ordered sets) in this paper will be assumed to be finite. 
By the order given by inclusion, the power set of a finite set becomes a poset   
called a {\it boolean algebra}. 
We say a poset $P$ is {\it simplicial}, if it admits the smallest element 
$\hat{0}$, and the interval $[\hat{0},  x]:= \{ \, y \in P \mid y \leq x \, \}$ is isomorphic to 
a boolean algebra for all $x \in P$. 
For the simplicity, we denote $\rank (x)$ of $x \in P$ just by $\rho(x)$. 
If $P$ is simplicial and $\rho(x) = m$, then $[\hat{0}, x]$ is isomorphic to the boolean algebra 
$2^{\{1, \ldots, m\}}$. 

Let  $\Delta$ be a finite simplicial complex (with $\emptyset \in \Delta$). 
Its face poset (i.e., the set of the faces of $\Delta$ with the order given by inclusion) 
is a simplicial poset.   
Any simplicial poset $P$ is the face (cell) poset of a regular cell complex, 
which we denote by $\Gamma(P)$. 
For $\hat{0} \ne x \in P$, $c(x) \in \Gamma(P)$ denotes the open cell corresponds to $x$. 
Clearly, $\dim c(x) = \rho(x)-1$. 
While the closure $\overline{c(x)}$ of $c(x)$ is always a simplex, the intersection 
$\overline{c(x)} \cap \overline{c(y)}$ for $x,y \in P$ is not necessarily a simplex. 
For example, if two $d$-simplices are glued along their boundaries, then it is not a simplicial 
complex, but gives a simplicial poset.  

In the rest of the paper, $P$ is a simplicial complex. For $x,y \in P$, set 
$$[x \vee y]:= \text{the set of minimal elements of $\{ \, z \in P \mid z \geq x, y \, \}$.}$$
More generally, for $x_1, \ldots, x_m \in P$, $[x_1 \vee \cdots \vee x_m ]$  
denotes the set of minimal elements of the common upper bounds of  $x_1,  \ldots, x_m$. 

Set $\{ \, y \in P \mid \rho(y) = 1 \, \}=\{ y_1, \ldots, y_n \}$. 
For $U \subset [n] := \{ 1, \ldots, n \}$, we simply denote $[\bigvee_{i \in U}y_i]$ by $[U]$. 
Here $[\emptyset]= \{\hat 0\}$.  
If $x \in [U]$, then $\rho(x) = \# U$. For each $x \in P$, there exists a unique $U$ such that 
$x \in [U]$. Let $x,x' \in P$ with $x \geq x'$ and $\rho(x)= \rho(x')+1$, and take 
$U,U' \subset [n]$ such that $x \in [U]$ and $x' \in [U']$. 
Since $U = U' \coprod \{ i\}$ for some $i$ in this case, we can set 
$$\alpha(i,U) := \# \{ \, j \in U \mid j < i \, \} \qquad \text{and} \qquad 
\epsilon(x,x'):= (-1)^{\alpha(i,U)}.$$ 
Then $\epsilon$ gives an incidence function of the cell complex $\Gamma(P)$, that is, 
for all $x,y \in P$ with $x > y$ and $\rho(x)= \rho(y)+2$, we have 
$$\epsilon(x,z) \cdot \epsilon(z,y)+ \epsilon(x,z') \cdot \epsilon(z',y)=0,$$ 
where $\{z, z'\} = \{ \, w \in P \mid x>w>y \, \}$. 

Stanley \cite{St91} 
assigned the commutative ring $A_P$ to a simplicial poset $P$. 
For the definition, we remark that if $[ x \vee y] \ne \emptyset$ then  
$\{ \, z \in P \mid z \leq x, y \, \}$ has the largest element $x \wedge y$.   
Let $\kk$ be a field, and $S := \kk[ \, t_x \mid x \in P \, ]$ 
the polynomial ring in the variables $t_x$. Consider the ideal  
$$I_P:= (\, t_xt_y - t_{x \wedge y} \sum_{z \in [x \vee y]} t_z \ | \ x,y \in P   \, ) 
+ (\, t_{\hat{0}} -1 \, )$$
of $S$ (if $[x \vee y] = \emptyset$, we interpret that 
$t_xt_y - t_{x \wedge y} \sum_{z \in [x \vee y]} t_z =t_xt_y$), 
and set $$A_P:= S/I_P.$$ 

We denote $A_P$ just by $A$, if there is no danger of confusion. 
Clearly,  $\dim A_P = \rank P = \dim \Gamma(P) +1$. 
For a rank 1 element $y_i \in P$, set $t_i :=t_{y_i}$. 
If $\{ x \}= [U]$ for some $U \subset [n]$ with $\# U \geq 2$, then $t_x = \prod_{i \in U} t_i$ 
in $A$, and $t_x$ is a ``dummy variable".  Since $I_P$ is a homogeneous ideal under the 
grading given by $\deg(t_x) = \rho(x)$, $A$ is a graded ring. 
If $\Gamma(P)$ is a simplicial complex, then $A_P$ is 
generated by degree 1 elements, 
and coincides with the Stanley-Reisner ring of $\Gamma(P)$. 

Note that $A$ also has a $\ZZ^n$-grading such that $\deg t_i \in \NN^n$ is the $i$th unit vector. 
For each $x \in P$, the ideal $$\p_x:=(t_z \mid z \not \leq x)$$ of $A$ is 
a ($\ZZ^n$-graded) prime ideal 
with $\dim A/\p_x = \rho(x)$, since $A/\p_x \cong \kk[t_i \mid y_i \leq x]$.  

In \cite{Du}, Duval adapted classical argument on Stanley-Reisner rings for $A_P$, and 
got basic results. 
Recently, M. Masuda and his coworkers studied $A_P$ with a view from {\it toric topology}, 
since the {\it equivariant cohomology} ring of a torus manifold is of the form $A_P$ (cf. \cite{Mas,MP}). 
In this paper, we will introduce another approach.

Let $R$ be a noetherian commutative ring, $\Mod R$ the category of $R$-modules, 
and $\mod R$ its full subcategory consisting of finitely generated modules.  
The {\it dualizing complex} $\cpx D_R$ of $R$ gives 
the important duality $\RHom_R(-, \cpx D_R)$ on the bounded derived category $\Db(\mod R)$ (cf. \cite{Ha}). 
If $R$ is a (graded) local ring with the maximal ideal $\m$, 
then the (graded) Matlis dual of $H^{-i}(\cpx D_R)$ is the local cohomology $H_\m^i(R)$. 

We have a concise description of a dualizing complex $A_P$ as follows. 
This result refines Duval's computation of $H_\m^i(A)$ (\cite[Theorem~5.9]{Du}). 

\begin{thm}\label{main} 
Let $P$ be a simplicial poset with $d = \rank P$, and set $A := A_P$. 
The complex 
$$
\cpx I_A: 0 \to I^{-d}_A \to I^{-d+1}_A \to \cdots \to I^0_A \to 0, 
$$
given by 
$$
I^{-i}_A := \Dsum_{\substack{x \in P, \\ \rho(x) = i}} A/\p_x,
$$
and 
$$
\partial_{\cpx I_A}^{-i} : I_A^{-i} \supset A/\p_x \ni 1_{ A/\p_x} \longmapsto 
\sum_{\substack{\rho(y)  =   i-1, \\ y \leq x}} \epsilon(x,y) \cdot 1_{A/\p_y} \in 
\Dsum_{\substack{\rho(y) =  i-1, \\ y \leq x}} A/\p_y  \subset I_A^{-i+1}
$$
is isomorphic to a dualizing complex $\cpx D_A$ of $A$ in $\Db(\Mod A)$.  
\end{thm}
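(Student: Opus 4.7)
My first step is to verify that $\cpx I_A$ is a complex. For each $x$ with $\rho(x)=i$, a direct calculation gives
\[
\partial^{-i+1}\partial^{-i}(1_{A/\p_x}) \;=\; \sum_{\substack{z\leq x\\ \rho(z)=i-2}} \Bigl(\sum_{z\leq y\leq x,\,\rho(y)=i-1} \epsilon(x,y)\,\epsilon(y,z)\Bigr)\, 1_{A/\p_z},
\]
and each inner sum vanishes by the incidence-function cocycle identity recalled before the theorem (the interval $[z,x]\subset[\hat 0,x]$ has exactly two intermediate elements, being of length $2$ in a boolean algebra). Each term $A/\p_x\cong \kk[t_i:y_i\leq x]$ is a polynomial ring of Krull dimension $\rho(x)$, hence Cohen--Macaulay, placed in cohomological position $-\rho(x)$; this matches the Cousin-type shape expected of a dualizing complex. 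Writing $V(x):=\{i:y_i\leq x\}$, the canonical $\kk$-basis $\{\,t_x\prod_{i\in V(x)}t_i^{a_i}\mid x\in P,\,a_i\in\NN\,\}$ of $A$ gives $\dim_\kk A_\u=\#[\supp(\u)]$ for every $\u\in\NN^n$, where $[\supp(\u)]$ is in the notation of the introduction.

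I then plan to verify the three properties characterizing a dualizing complex: (a) $\cpx I_A\in\Db(\mod A)$ (immediate from boundedness and finite generation), (b) $\cpx I_A$ has finite injective dimension over $A$, and (c) the canonical morphism $A\to \RHom_A^\bullet(\cpx I_A,\cpx I_A)$ is a quasi-isomorphism. The $\ZZ^n$-grading on $A$ descends to every term of $\cpx I_A$ and to all relevant derived functors, reducing such computations to finite-dimensional $\kk$-complexes indexed by multidegrees.

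For (c), I would invoke graded Grothendieck local duality: since $A$ is a quotient of a Gorenstein ring it admits some dualizing complex $\cpx D_A$, and it then suffices to show $\RGm(\cpx I_A)\simeq \E*(\kk)$ concentrated in cohomological degree $0$. Because each $A/\p_x$ is Cohen--Macaulay, $\RGm(A/\p_x)$ sits in position $\rho(x)$ and equals the graded Matlis dual of $A/\p_x$, supported in multidegrees $-\u$ with $\supp(\u)=V(x)$. Hence the Cartan--Eilenberg spectral sequence for $\RGm(\cpx I_A)$ has $E_1$-page concentrated on the antidiagonal $p+q=0$, degenerates at $E_2$ for bidegree reasons, and places $\RGm(\cpx I_A)$ entirely in total cohomological degree $0$. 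A multidegree-by-multidegree dimension count — in degree $-\u$ only the $x$ with $V(x)=\supp(\u)$ contribute, yielding $\kk^{\#[\supp(\u)]}$ — agrees with $\dim_\kk \E*(\kk)_{-\u}$ computed above.

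The main obstacle is twofold. First, $A$ need not be Cohen--Macaulay when $\Gamma(P)$ is not, so (b) is not automatic; my plan is to construct, for each $x$, a bounded $\ZZ^n$-graded injective resolution of $A/\p_x$ of length $\leq d-\rho(x)$ using the indecomposable graded injectives $\E*(A/\p_y)$ for $y\geq x$ in $P$, and then assemble these term-by-term into a bounded double complex resolving $\cpx I_A$. Second, the dimension match in the previous paragraph identifies only the associated graded of the $p$-filtration on $H^0(\RGm\cpx I_A)$ coming from the spectral sequence; promoting this to a genuine $A$-module isomorphism with $\E*(\kk)$ requires verifying that multiplication by each $t_j$ is transported correctly across filtration layers, and the precise incidence signs $\epsilon(x,y)$ in the differentials of $\cpx I_A$ should be exactly what forces this to work, since the cross-multidegree action $t_j\colon \E*(\kk)_{-\u}\to\E*(\kk)_{-\u+\be_j}$ for $j\notin\supp(\u)$ vanishes on each individual $H^{\rho(x)}_\m(A/\p_x)$ summand and can only be seen through the extension.
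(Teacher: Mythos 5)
Your reduction is a legitimate (and genuinely different) route: since $A$ is a quotient of a polynomial ring it has a dualizing complex, and graded local duality plus biduality show that any $\cpx K\in\Db(\gr A)$ with $\RGm(\cpx K)\cong \E*(\kk)$ concentrated in degree $0$ must be the normalized dualizing complex; your item (b) is therefore not actually needed, which is fortunate since the bounded-injective-resolution construction you sketch is itself substantial. The spectral-sequence vanishing of $H^n(\RGm\cpx I_A)$ for $n\neq 0$ is correct, and so is the multidegree dimension count in degree~$0$: the key point is that $H^{\rho(x)}_\m(A/\p_x)\cong (A/\p_x)^\vee(\11_{V(x)})$, with $V(x):=\{i:y_i\le x\}$, is supported only in multidegrees $-\ua$ with $\supp(\ua)=V(x)$, so the contributing $x$ are exactly those in $[\supp(\ua)]$.

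The proposal does not close, however, because a coincidence of Hilbert series is not an isomorphism of graded $A$-modules, and you never construct a morphism between $H^0(\RGm\cpx I_A)$ and $\E*(\kk)$ in either direction. You flag this as the ``second obstacle'' and assert that the incidence signs ``should'' make it work, but this is exactly where the substance of the theorem sits and it is left unproved. (The filtration framing is also slightly off: since at most one value of the filtration index $p=-\#\supp(\ua)$ contributes in each multidegree $-\ua$, there is no vector-space extension to untangle; what is missing is a comparison map from which the $A$-action could even be read.)

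The paper takes a route that sidesteps this entirely. It first realizes the $\ZZ^n$-graded normalized dualizing complex $\cpx{\D*}_A$ explicitly as the graded Matlis dual of the \v Cech complex, with terms $\bigoplus_{\rho(x)=i}E_A(x)$ where $E_A(x)=(A_U\cdot u_x)^\vee$. It then writes down an explicit cochain map $\phi^\bullet:\cpx I_A\to\cpx{\D*}_A$ that identifies $\cpx I_A$ with the $\NN^n$-graded part $(\cpx{\D*}_A)_{\ge\b0}$ termwise, and proves $\phi^\bullet$ is a quasi-isomorphism by showing each $H^i(\cpx{\D*}_A)$ is already $\NN^n$-graded. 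That last step uses the squarefree structure of $A$ over the polynomial ring $T=\operatorname{Sym}A_1$ (Lemma~\ref{sqf/T} with \cite[Theorem~2.6]{Y}), and it is precisely the ingredient your proposal lacks: it converts a numerical coincidence into an actual identification. Without it, or a replacement of comparable strength, the argument has a genuine gap.
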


In \cite{Y}, the author defined a {\it squarefree module} over a polynomial ring, 
and many applications have been found. This idea is also useful for our study. 
In fact, regarding $A$ as a squarefree module 
over the polynomial ring $\operatorname{Sym} A_1$, 
Duval's formula of $H_\m^i(A)$ can be proved quickly (Remark~\ref{quick proof}).
Moreover, we can show that a theorem of Murai and Terai (\cite{MT}) on the $h$-vectors of 
simplicial complexes also holds for simplicial posets (Theorem~\ref{MT S_r}). 
In the present paper, we will define a squarefree module over $A$ to study the interaction 
between the topological properties of $\Gamma(P)$ and the homological properties of $A$.

The category $\Sq A$ of squarefree $A$-modules is an abelian category with enough injectives, 
and $A/\p_x$ is an injective object.  Hence  $\cpx I_A \in \Db(\Sq A)$, 
and $\DD(-):=\uHom^\bullet_A(-, \cpx I_A)$ gives a duality on 
$\Cb(\InjSq) \, (\cong \Db(\Sq A))$, where $\InjSq$ denotes 
the full subcategory of $\Sq A$ consisting of all injective objects 
(i.e., finite direct sums of copies of $A/\p_x$  for various $x \in P$). 
Via the forgetful functor $\Sq A \to \mod A$, $\DD$ coincides with 
the usual duality $\RHom_A(-, \cpx D_A)$ on $\Db(\mod A)$. 

By \cite{Y05}, we can assign a squarefree $A$-module $M$ 
the constructible sheaf $M^+$ on (the underlying space $X$ of) $\Gamma(P)$. 
In this context, the duality $\DD$ corresponds to the Poincar\'e-Verdier duality 
on the derived category of 
the constructible sheaves on $X$ up to translation as in \cite{OY,Y05}. 
In particular, the sheafification of the complex $\cpx I_A[-1]$ 
coincides with the Verdier dualizing complex of $X$ with the coefficients in $\kk$, where $[-1]$ represents the translation by $-1$. 
Using this argument, we can show the following. 
At least for the Cohen-Macaulay property, the next result has been   
shown in Duval \cite{Du}. However our proof gives new perspective.

\begin{cor}[see, Theorem~\ref{CM & Gor}] 
The Cohen-Macaulay, Gorenstein* and Buchsbaum properties,  and Serre's condition 
$(S_i)$ of $A_P$ depend only on the topology of the underlying space of $\Gamma(P)$ and $\chara(\kk)$.
Here we say $A_P$ is Gorenstein*, if $A_P$ is Gorenstein and the graded canonical 
module $\omega_{A_P}$ is generated by its degree 0 part.  
\end{cor}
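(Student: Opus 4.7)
The plan is to translate each of the four algebraic conditions on $A$ into a condition on the constructible sheaf $(\cpx I_A)^+$ on the underlying space $X$ of $\Gamma(P)$, and then to exploit the fact that this sheaf is essentially the Verdier dualizing complex $\omega_X$, which depends only on the topology of $X$ and on $\chara(\kk)$.

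First, I would use Theorem~\ref{main} together with graded local duality to phrase each property cohomologically in terms of the dualizing complex $\cpx I_A$. The Cohen-Macaulay property becomes $H^{j}(\cpx I_A) = 0$ for $j \ne 0$, since $H_\m^i(A)$ is the graded Matlis dual of $H^{-i}(\cpx I_A)$; the Buchsbaum property becomes an analogous condition stating that $\m$ acts suitably trivially on these cohomology modules; Serre's condition $(S_i)$ becomes vanishing conditions after localizing $\cpx I_A$ at each $\ZZ^n$-graded prime $\p_x$; and Gorenstein* requires in addition that $\omega_A := H^0(\cpx I_A)$ be generated in degree $0$. All these reformulations take place inside $\Db(\Sq A)$, so the sheaf-theoretic translation of \cite{Y05} applies.

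Next I would sheafify. By the remark preceding the corollary, $(\cpx I_A)^+[-1] = \omega_X$, and the functor $(-)^+$ intertwines the algebraic duality $\DD$ with Poincar\'e-Verdier duality on the constructible derived category of $X$. Consequently, each vanishing/annihilation condition on $\cpx I_A$ becomes a local condition on $\omega_X$: conditions at the maximal ideal become conditions on the hypercohomology of $\omega_X$ over $X$, while conditions at $\p_x$ become conditions on stalks of $\omega_X$ at points of the cell $c(x)$. All such conditions are manifestly topological invariants of $X$ (depending on $\chara(\kk)$ only through the coefficients), so the Cohen-Macaulay, Buchsbaum, and $(S_i)$ cases follow directly.

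The main obstacle is the Gorenstein* case, since the degree-$0$ generation of $\omega_A$ is a graded algebraic condition that is not transparently topological. I would resolve this by exploiting the explicit $\ZZ^n$-graded description of $\cpx I_A$ in Theorem~\ref{main} to identify this generation condition with the statement that $\omega_X$ is (up to shift) a rank-$1$ constant sheaf on $X$, i.e., that $X$ is a $\kk$-orientable $\kk$-homology manifold of dimension $d-1$. Once this identification is in place, Gorenstein* is visibly a topological property, completing the proof.
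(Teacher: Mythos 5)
Your strategy is essentially the paper's: sheafify via $\Sq A\cong\mod\Lambda$, identify $\TT^{-1}((\cpx I_A)^+)\cong\cDx$ (Theorem~\ref{Verdier}), and read the ring-theoretic properties off of $\cDx$, which is a topological invariant of $X$ depending on $\kk$ only through $\chara(\kk)$; this is precisely how Theorem~\ref{CM & Gor} is proved. Two points deserve correction. First, with the paper's normalization $\cpx I_A$ sits in cohomological degrees $-d,\dots,0$, so Cohen--Macaulayness is $H^j(\cpx I_A)=0$ for $j\ne -d$ and $\omega_A=H^{-d}(\cpx I_A)$, not $j\ne 0$ and $H^0$. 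Second, and more substantively, the vanishing of $H_\m^i(A)$ does not translate to a purely local (stalk-level) condition on $\cDx$: by Theorem~\ref{local cohomology}, the degree-$\b0$ piece of $H_\m^i(A)$ is the \emph{global} reduced cohomology $\rH^{i-1}(X;\kk)$, while the pieces with $\ua\ne\b0$ control the cohomology sheaves $\cH^j(\cDx)$ over the open stars $U_x$. This is exactly why Theorem~\ref{CM & Gor}(a) imposes two separate conditions (one on $\cH^i(\cDx)$, one on $\rH^i(X;\kk)$) and why Buchsbaum in part (c) keeps only the local half. Both halves are topological, so your conclusion is unharmed, but your sketch of ``conditions at $\m$ become hypercohomology, conditions at $\p_x$ become stalks'' blurs this split. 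Finally, your reading of Gorenstein* as orientable-homology-manifoldness only makes sense under the standing CM hypothesis and for $d\ge 2$; the cleaner criterion is $\cH^{-d+1}(\cDx)\cong\const_X$, and the $d=1$ case (two points) must be handled separately, as in Theorem~\ref{CM & Gor}(b).
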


While Theorem~\ref{main} and the results in \S4 are very parallel to the 
corresponding ones for {\it toric face rings} (\cite{OY}), 
the construction of a toric face ring and that of $A_P$ are not so similar. 
Both of them are generalizations of the notion of Stanley-Reisner rings, 
but the directions of the generalizations are almost opposite 
(for example, Proposition~\ref{Gorenstein} does not hold for toric face rings).        
The prototype of the results in \cite{OY} and the present paper is found in 
\cite{Y05}. However, the subject there is ``sheaves on a poset", and  
the connection to our rings is not so straightforward.

\section{The proof of Theorem~\ref{main}}
In the rest of the paper, $P$ is a simplicial poset with $\rank P=d$. 
We use the same convention as the preceding section, in particular, 
$A=A_P$, $\{ \, y \in P \mid \rho(y) = 1 \, \}=\{ y_1, \ldots, y_n \}$, and 
$t_i :=t_{y_i} \in A$. 
 
For a subset $U \subset [n]  = \{ \, 1, \ldots, n  \, \}$,  
$A_U$ denotes the localization of $A$ by the multiplicatively 
closed set $\{ \, \prod_{i \in U} t_i^{a_i} \mid a_i \geq 0 \, \}$.  
\begin{lem}\label{idempotent}
For $x \in [U]$, 
$$u_x:= \frac{t_x}{\prod_{i \in U} t_i} \in A_U$$
is an idempotent. 
Moreover, $u_x \cdot u_{x'}=0$ for $x,x' \in [U]$ with $x \ne x'$, and 
\begin{equation}\label{idempotent decomposition}
1_{A_U} = \sum_{x \in [U]}u_x.
\end{equation}
Hence we have a $\ZZ^n$-graded direct sum decomposition
$$A_U = \bigoplus_{x \in [U]} A_U \cdot u_x$$
(if $[U] = \emptyset$, then $A_U = 0$).
\end{lem}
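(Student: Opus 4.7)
The plan is to reduce the lemma to two algebraic identities in $A$: the orthogonality $t_x t_{x'} = 0$ for distinct $x, x' \in [U]$, and the resolution of unity $\prod_{i \in U} t_i = \sum_{x \in [U]} t_x$. Once these are in hand, the idempotence of each $u_x$, their orthogonality in $A_U$, and the $\ZZ^n$-graded direct sum decomposition follow formally. Both identities are extracted from the defining relations of $A$ by systematically exploiting the boolean algebra structure of the intervals $[\hat 0, z]$.

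\emph{Orthogonality.} For distinct $x, x' \in [U]$, I would show that $[x \vee x']$ is empty, so that $t_x t_{x'} = 0$ by the convention in the definition of $I_P$. Otherwise, any common upper bound $z$ would satisfy $z \geq y_i$ for all $i \in U$, and inside the boolean algebra $[\hat 0, z]$ the element of rank $|U|$ whose set of atoms equals $\{y_i \mid i \in U\}$ is unique. Both $x$ and $x'$ would coincide with this element, contradicting $x \neq x'$. Dividing $t_x t_{x'} = 0$ by $(\prod_{i \in U} t_i)^2$ in $A_U$ gives $u_x \cdot u_{x'} = 0$.

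\emph{Resolution of unity.} This I would prove by induction on $|U|$. The cases $|U| \leq 1$ are immediate, since $t_{\hat 0} = 1$ and $[\{i\}] = \{y_i\}$. For the inductive step, write $U = U' \sqcup \{j\}$ with $j \notin U'$ and use the inductive hypothesis to obtain $\prod_{i \in U} t_i = \sum_{x' \in [U']} t_j t_{x'}$. Since the atoms of $[\hat 0, x']$ are exactly $\{y_i \mid i \in U'\}$, we have $y_j \not\leq x'$, so the defining relation simplifies to $t_j t_{x'} = \sum_{z \in [y_j \vee x']} t_z$ (with the sum understood to be zero when $[y_j \vee x'] = \emptyset$). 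The remaining task, and the main technical obstacle, is to verify the combinatorial bijection $\bigsqcup_{x' \in [U']} [y_j \vee x'] = [U]$: each $z \in [U]$ is paired with the unique rank-$|U'|$ element of $[\hat 0, z]$ whose atoms are $\{y_i \mid i \in U'\}$, using the boolean structure of $[\hat 0, z]$ both to exhibit this $x'$ in $[U']$ and to check that $z$ is minimal over $\{y_j, x'\}$. Dividing the resulting identity by $\prod_{i \in U} t_i$ in $A_U$ then yields $1_{A_U} = \sum_{x \in [U]} u_x$.

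\emph{Consequences.} Multiplying $1_{A_U} = \sum_{x' \in [U]} u_{x'}$ by $u_x$ and invoking orthogonality gives $u_x = u_x^2$, so each $u_x$ is idempotent. Homogeneity of the identity $\prod_{i \in U} t_i = \sum_{x \in [U]} t_x$ forces $\deg t_x = \sum_{i \in U} \be_i$ in the $\ZZ^n$-grading, so each $u_x$ is $\ZZ^n$-homogeneous of degree $\b0$. A complete system of homogeneous orthogonal idempotents then splits $A_U$ as the $\ZZ^n$-graded direct sum $\bigoplus_{x \in [U]} A_U \cdot u_x$; the case $[U] = \emptyset$ is recovered as $1_{A_U} = 0$, i.e.\ $A_U = 0$.
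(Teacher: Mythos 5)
Your proof is correct and takes essentially the same approach as the paper: derive the orthogonality $t_x t_{x'}=0$ (from $[x\vee x']=\emptyset$) and the resolution of unity $\prod_{i\in U}t_i=\sum_{x\in[U]}t_x$, then deduce idempotence and the $\ZZ^n$-graded splitting formally. The paper simply asserts these two identities; your inductive proof of the resolution of unity and your boolean-algebra argument for orthogonality correctly fill in those details.
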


\begin{proof}
Since ${\prod_{i \in U} t_i} = \sum_{x \in [U]}t_x$ in $A$, the equation \eqref{idempotent decomposition}
is clear. For $x,x' \in [U]$ with $x \ne x'$, we have $[x \vee x'] = \emptyset$ and $t_x \cdot t_{x'}=0$. 
Hence $u_x \cdot u_{x'}=0$ and 
$$u_x = u_x \cdot 1_{A_U} = u_x \cdot \sum_{x'' \in [U]} u_{x''}= u_x \cdot u_x.$$
Now the last assertion is clear.  
\end{proof}

Let $\Gr A$ be the category of $\ZZ^n$-graded $A$-modules, 
and  $\gr A$ its full subcategory consisting of finitely generated modules. 
Here a morphism $f :M\to N$ in $\Gr A$ is an $A$-homomorphism with 
$f(M_\ba) \subset N_\ba$ for all $\ba \in \ZZ^n$. 
As usual, for $M$ and $\ba \in \ZZ^n$, $M(\ba)$ denotes the 
shifted module of $M$ with $M(\ba)_\bb = M_{\ba + \bb}$. 
For $M, N \in \Gr A$, 
$$\uHom_A(M,N) := \Dsum_{\ba \in \ZZ^n} \Hom_{\Gr A}(M, N(\ba))$$ 
has a $\ZZ^n$-graded $A$-module structure. 
Similarly,  $\uExt_A^i(M,N) \in \Gr A$ can be defined. 
If $M \in \gr A$, the underlying module of $\uHom_A(M,N)$ 
is isomorphic to $\Hom_A(M,N)$, and the same is true for $\uExt_A^i(M,N)$. 

If $M \in \Gr A$, then $M^\vee := \Dsum_{\ba \in \ZZ^n}\Hom_\kk(M_{-\ba}, \kk)$ can be regarded as a $\ZZ^n$-graded 
$A$-module, and $(-)^\vee$ gives an exact contravariant functor from  $\Gr A$ to 
itself, which is called the {\it graded Matlis duality functor}. For $M \in \Gr A$, 
it is {\it Matlis reflexive} (i.e., $M^{\vee \vee} \cong M$) if and only if 
$\dim_\kk M_\ba < \infty$ for all $\ba \in \ZZ^n$.

\begin{lem}
$A_U \cdot u_x$ is Matlis reflexive, and $E_A(x):=(A_U \cdot u_x)^\vee$ is injective in $\Gr A$. 
Moreover, any indecomposable injective in $\Gr A$ is isomorphic to 
$E_A(x) (\ba)$ for some $x \in P$ and $\ba \in \ZZ^n$.  
\end{lem}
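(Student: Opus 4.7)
The plan is to establish the three claims in order, using Lemma~\ref{idempotent} throughout and invoking standard facts about graded Matlis duality and indecomposable injectives over noetherian $\ZZ^n$-graded rings. First, for Matlis reflexivity, I note that $A_U u_x$ is a direct summand of $A_U = A[t_i^{-1}: i \in U]$, which is a finitely generated $\kk$-algebra; such an algebra has $\kk$-finite $\ZZ^n$-graded pieces, and this finiteness descends to the summand. Next, for injectivity: $A_U$ is flat over $A$ as a localization, hence so is its direct summand $A_U u_x$. Since the graded Matlis dual $(-)^\vee$ converts flat modules to injectives (via the adjunction $\Hom_A(M, F^\vee) \cong (M \tns_A F)^\vee$ combined with exactness of $(-)^\vee$), $E_A(x) = (A_U u_x)^\vee$ is injective in $\Gr A$.

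For the classification, I would invoke the structure theorem that every indecomposable injective in $\Gr A$ is, up to $\ZZ^n$-shift, the graded injective hull ${}^*E_A(A/\p)$ for some $\ZZ^n$-graded prime $\p$. Two steps then remain. The first is to classify the graded primes of $A$, for which I would apply graded lying-over to the natural embedding $A \hookrightarrow B := \prod_{x \in P_{\max}} A/\p_x$. This is an injection because $A_P$ is reduced with minimal primes $\{\p_x : x \in P_{\max}\}$ (a known property of Stanley's rings), and the extension is module-finite because each $A/\p_x$ is $A$-cyclic. The graded primes of $B$ are the graded primes of its polynomial-ring factors $A/\p_x \cong \kk[t_i : i \in U_x]$, namely $(t_j : j \in U_x \setminus V)$ for subsets $V \subset U_x$, and a combinatorial calculation using the Boolean-algebra structure of $[\hat 0, x]$ shows that their contractions in $A$ are precisely $\p_{x_V}$, with $x_V \in [V]$ the unique element of $[\hat 0, x]$ satisfying $U_{x_V} = V$. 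Ranging over $x$ and $V$ therefore recovers exactly $\{\p_z : z \in P\}$, with no other graded primes.

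The second step is identifying $E_A(x) \cong {}^*E_A(A/\p_x)$, which I would handle by exhibiting an essential embedding $A/\p_x \hookrightarrow E_A(x)$. Taking $\phi_0 \in (E_A(x))_0$ dual to $u_x \in (A_U u_x)_0$, one checks that $t_z \phi_0 = 0$ for every $z \not\leq x$---because such $t_z$ shifts $\phi_0$ into degrees on which $E_A(x)$ vanishes due to the $\ZZ^n$-support shape of $A_U u_x$---so $\Ann_A(\phi_0) = \p_x$ and $A\phi_0 \cong A/\p_x$. Essentiality follows because any nonzero element of $E_A(x)$ can be translated into $A\phi_0$ by multiplication by a suitable monomial in the $t_i$, $i \in U_x$, using that each such $t_i$ acts as a bijection between adjacent 1-dimensional graded pieces in the appropriate direction.

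The hard part will be the combinatorial verification of the contraction formula $\p_x + (t_j : j \in U_x \setminus V) = \p_{x_V}$, which hinges on combining the Boolean-algebra structure of the intervals $[\hat 0, x]$ with the Stanley relations defining $A_P$. The remaining ingredients---flatness of localization, exactness of Matlis duality, graded lying-over, and essentiality of the embedding---are routine applications of standard techniques.
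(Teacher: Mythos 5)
Your outline for the first two assertions is sound and essentially matches the paper: Matlis reflexivity via finiteness of the $\ZZ^n$-graded pieces (the paper makes this precise by exhibiting $A_U u_x$ as a $\ZZ^n$-graded free $\kk[t_i^{\pm 1}:i\in U]$-module and counting), and injectivity of $E_A(x)$ via flatness of $A_U u_x$ plus the adjunction $\uHom_A(M,F^\vee)\cong(M\otimes_A F)^\vee$. Your classification argument is considerably more elaborate than the paper's (which simply records that $E_A(x)$ is the injective hull of $A/\p_x$ and that every graded associated prime is some $\p_x$), but it contains a genuine gap at exactly the point where simplicial posets differ from simplicial complexes.

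The gap is in the essentiality step, in the claim that $t_z\phi_0=0$ for $z\not\leq x$ ``because such $t_z$ shifts $\phi_0$ into degrees on which $E_A(x)$ vanishes.'' Write $U=U_x$. The $\ZZ^n$-support of $A_U u_x$ is contained in $\ZZ^U\times\NN^{[n]\setminus U}$, so the support of $E_A(x)$ is contained in $\ZZ^U\times(-\NN)^{[n]\setminus U}$. Your degree argument therefore works only when $\deg t_z$ has a positive coordinate outside $U$, i.e.\ when $z$ has a vertex $y_i$ with $i\notin U$. But in a simplicial poset there can be $z\not\leq x$ all of whose vertices lie in $U$: precisely when $z$ and some $x'\leq x$ belong to the same $[W]$ with $W\subset U$ but $z\neq x'$ (the two-triangles-glued-along-their-boundary example gives such $z$). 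In that case $\deg t_z\in\NN^U$ lies squarely in the support of $E_A(x)$, so the degree argument says nothing. One must instead use the algebra: since $z\not\leq x$ but all vertices of $z$ lie below $x$, the interval structure forces $[z\vee x]=\emptyset$, hence $t_z t_x=0$ in $A$, and a direct computation in $(A_U u_x)^\vee$ (pairing against $u_x/\prod_{i\in U}t_i^{a_i}$) then gives $t_z\phi_0=0$. In a genuine simplicial complex this case cannot occur, which is exactly why the degree-support heuristic is tempting but insufficient here; the relation $t_xt_z=0$ coming from $[x\vee z]=\emptyset$ is the essential new input.

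Two smaller remarks. First, ``$A_U$ is a finitely generated $\kk$-algebra, hence has $\kk$-finite $\ZZ^n$-graded pieces'' is not a correct general principle; the finiteness needs an argument such as the paper's explicit free-module description of $A_U u_x$ over $\kk[t_i^{\pm1}:i\in U]$. Second, your lying-over route to classifying the $\ZZ^n$-graded primes of $A$ via $A\embto\prod_{x\in P_{\max}}A/\p_x$ is workable (the contraction computation $\p_x+(t_j:j\in U_x\setminus V)=\p_{x_V}$ you flag does go through, using divisibility of $t_z$ by $t_j$ in $A/\p_x$), but it is heavier machinery than the paper uses; the paper appeals directly to the standard fact that graded associated primes of finitely generated graded modules are graded primes, which for $A$ are visibly the $\p_x$.
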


\begin{proof}
Clearly, $A_U \cdot u_x$ is a $\ZZ^n$-graded free $\kk[ \, t_i^{\pm 1} \mid i \in U \,]$-module. 
For $\ba=(a_1, \ldots, a_n) \in \ZZ^n$, let $\ba' \in \NN^n$ be the vector whose $i$th component is 
$$
a_i'= \begin{cases}
a_i & \text{if $i \not \in U$,}\\
1 & \text{otherwise.}
\end{cases}
$$ 
Then we have $\dim_\kk (A_U \cdot u_x)_\ba = \dim_\kk (A \cdot t_x)_{\ba'} 
\leq \dim_\kk A_{\ba'} < \infty,$
and $A_U \cdot u_x$ is Matlis reflexive. 

The injectivity of $E_A(x)$ follows from the same argument as \cite[Lemma~11.23]{MS}. 
In fact, we have a natural isomorphism
$$\uHom_A(M, E_A(x)) \cong  (M \otimes_A E_A(x)^\vee )^\vee$$ for $M \in \Gr A$ by \cite[Lemma~11.16]{MS}. 
Since $E_A(x)^\vee \, ( \cong A_U \cdot u_x)$ is a flat $A$-module,  $\uHom_A(-, E_A(x))
$ 
gives an exact functor.   

Since $E_A(x)$ is the injective envelope of $A/\p_x$  in $\Gr A$,  
and an associated prime of $M \in \Gr A$ is $\p_x$ for some $x \in P$, 
the last assertion follows. 
\end{proof}

If $(A_U \cdot u_x)_{-\ba} \ne 0$ for $\ba \in \NN^n$, then it is obvious that $\ba \in \NN^U$ 
(i.e., $a_i = 0$ for $i \not \in U$). As shown in the above proof, 
we have $\dim_\kk (A_U \cdot u_x)_{-\ba} =1$ with 
$t^{-\ba} \cdot u_x := u_x/\prod_{i \in U} t_i^{a_i} \in (A_U \cdot u_x)_{-\ba}$ 
in this case.  

For $M \in \Gr A$, its ``$\NN^n$-graded part" $M_{\geq \b0} := \bigoplus_{\ba \in \NN^n} M_\ba$ 
is a submodule of $M$. Then we have a  canonical injection 
$$\phi_x : A/\p_x \longrightarrow E_A(x)$$ defined as follows:   
The set of the monomials 
$t^\ba := \prod_{i \in U} t_i^{a_i} \in A/\p_x \cong \kk[ \, t_i \mid i \in U \, ]$ with 
$\ba \in \NN^U$ forms a $\kk$-basis of $A/\p_x$ ($\prod_{i \in U} t_i = t_x$ here), and 
$\phi_x(t^\ba) \in (E_A(x))_\ba =  \Hom_\kk( \, (A_U \cdot u_x)_{-\ba}, \, \kk \,)$ 
for $\ba \in \NN^U$ is simply given by $t^{-\ba} \cdot u_x \longmapsto 1$. 
Note that $\phi_x$ induces the isomorphism 
\begin{equation}\label{phi positive}
A/\p_x \cong E_A(x)_{\geq \b0}.
\end{equation}

\medskip

The C\v ech complex $\cpx C$ of $A$ with respect to $t_1, \ldots,  t_n$ is of the form 
$$0 \to C^0 \to C^1 \to \cdots \to C^d \to 0 \quad \text{with} \quad 
C^i = \Dsum_{\substack {U \subset [n] \\ \# U =i}}A_U$$  
(note that if $\# U > d = \dim A$ then $A_U=0$). The differential map is given by 
$$C^i \supset A_U \ni a \longmapsto \sum_{\substack {U' \supset U \\ \# U' =i+1}}
(-1)^{\alpha(U' \setminus U,U)} 
f_{U',U}(a) \in \Dsum_{\substack {U' \supset U \\ \# U' =i+1}} A_{U'} \subset C^{i+1},$$
where $f_{U',U}:A_U \to A_{U'}$ is the natural map.

Since the radical of the ideal $(t_1, \ldots, t_n)$ is the graded maximal ideal 
$\m:= (t_x \mid \hat{0}\ne x \in P )$, 
the cohomology $H^i(\cpx C)$ of $\cpx C$ is isomorphic to the local cohomology $H_\m^i(A)$. 
Moreover, $\cpx C$ is isomorphic to ${\rm R}\Gamma_\m A$ in the bounded derived category 
$\Db(\Mod A)$. Here ${\rm R}\Gamma_\m: \Db(\Mod A) \to \Db(\Mod A)$ is the right derived functor of 
$\Gamma_\m: \Mod A \to \Mod A$ given by $\Gamma_\m(M) = \{ s \in M \mid \text{$\m^i s= 0$ 
for $i \gg 0$} \, \}$.

The same is true in the $\ZZ^n$-graded context. 
We may regard $\Gamma_\m$ as a functor from $\Gr A$ to itself, and let ${}^*{\rm R} \Gamma_\m :
\Db(\Gr A) \to \Db(\Gr A)$ be its right derived functor. 
Then $\cpx C \cong {}^* {\rm R} \Gamma_\m(A)$ in $\Db(\Gr A)$.

Let $\cpx{\D*}_A$ be the $\ZZ^n$-graded normalized dualizing complex of $A$. 
By the $\ZZ^n$-graded version of the local duality theorem \cite[Theorem~V.6.2]{Ha}, 
$(\cpx{\D*}_A)^\vee \cong  {}^*{\rm R}\Gamma_\m(A)$ in $\Db(\Gr A)$. 
Since $\cpx{\D*}_A \in \Db_{\gr A}(\Gr A)$, it is Matlis reflexive, and we have
$$\cpx{\D*}_A \cong (\cpx{\D*}_A)^{\vee \vee} \cong 
{}^*{\rm R}\Gamma_\m(A)^\vee \cong (\cpx C)^\vee.$$
Since each $(C^i)^\vee$ is isomorphic to the injective object 
$$\bigoplus_{\substack{x \in P \\ \rho(x)=i}}E_A(x)$$
in $\Gr A$,  $(\cpx C)^\vee$ actually coincides with $\cpx{\D*}_A$. 
Hence $\cpx{\D*}_A$ is of the form 
$$0 \to \bigoplus_{\substack{x \in P \\ \rho(x)=d}}E_A(x) \to 
 \bigoplus_{\substack{x \in P \\ \rho(x)=d-1}}E_A(x) \to \cdots \to E_A(\hat{0}) \to 0,$$
where the cohomological degree is given by the same way as $\cpx I_A$. 

For each $i \in \ZZ$, we have an injection $\phi^i: I_A^i \to {}^*\!D^i_A$ given by 
$$I_A^i = \Dsum_{\rho(x)=-i} A/\p_x  
\supset A/\p_x \stackrel{\phi_x}{\longrightarrow} E_A(x) \subset
\Dsum_{\rho(x)=-i}E_A(x)= {}^*\!D^i_A.$$

By the definition of 
$\phi_x : A/\p_x \longrightarrow E_A(x) = (A_U \cdot u_x)^\vee$, 
we have a cochain map $$\phi^\bullet: \cpx I_A \to \cpx{\D*}_A.$$ 

\begin{lem}\label{positive}
For all $i$, the cohomology $H^i(\cpx{\D*}_A)$ of $\cpx{\D*}_A$ is $\NN^n$-graded. 
\end{lem}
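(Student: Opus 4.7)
The plan is to translate the statement via graded Matlis duality into a vanishing of the local cohomology of $A$, and then establish the vanishing by a degree-wise analysis of the \v{C}ech complex $\cpx C$. Since $\cpx{\D*}_A \cong (\cpx C)^\vee$ in $\Db(\Gr A)$ (as established above) and $(-)^\vee$ is exact contravariant with $(M^\vee)_\ba \cong \Hom_\kk(M_{-\ba},\kk)$, one has $H^i(\cpx{\D*}_A)_\ba \cong \Hom_\kk(H^{-i}(\cpx C)_{-\ba},\kk)$. Hence the lemma reduces to showing that $H^j(\cpx C) = H_\m^j(A)$ is supported in $-\NN^n$; equivalently, $H^j(\cpx C)_\bb = 0$ whenever $\bb \in \ZZ^n$ has some coordinate $b_k > 0$.

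For such $\bb$, I would view $\cpx C$ as the total complex of $\cpx{\widetilde C} \otimes_A [A \to A[t_k^{-1}]]$, where $\cpx{\widetilde C}$ is the \v{C}ech complex on $\{t_i : i \ne k\}$, and show that the degree-$\bb$ part is acyclic. This follows once, for every $U \subseteq [n]\setminus\{k\}$, the inclusion $(A_U)_\bb \hookrightarrow (A_{U\cup\{k\}})_\bb$ is bijective. Since $A_{U\cup\{k\}} = A_U[t_k^{-1}]$ and $(A_U[t_k^{-1}])_\bb = \colimit_{m\ge 0}(A_U)_{\bb + m\be_k}$ with transitions given by multiplication by $t_k$, the bijectivity follows from the structural sub-claim: for every $\bc \in \ZZ^n$ with $c_k \ge 1$ and every $U \subseteq [n]\setminus\{k\}$, multiplication by $t_k$ is an isomorphism $(A_U)_\bc \xrightarrow{\cong} (A_U)_{\bc + \be_k}$.

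To prove the sub-claim, I would first reduce to the case $U = \emptyset$: writing $(A_U)_\bc = \colimit_{\mathbf{d} \in \NN^U} A_{\bc+\mathbf{d}}$ with $t_i$-transitions ($i \in U$, hence $i \ne k$), the $k$-coordinate of every $\bc + \mathbf{d}$ equals $c_k \ge 1$, so the $U = \emptyset$ case applied term-by-term gives the isomorphism on the colimit. For $U = \emptyset$, both sides vanish unless $\bc \in \NN^n$, so I may assume $\bc \in \NN^n$ with $c_k \ge 1$. Injectivity of $t_k : A_\bc \to A_{\bc + \be_k}$ would follow from $A = A_P$ being reduced and embedding into the product of polynomial rings $\prod_{x \text{ maximal in } P} A/\p_x$, in which each $t_k$ is either a variable or zero; the condition $c_k \ge 1$ confines the support of $A_\bc$ to the factors where $y_k \le x$, so $t_k$ acts there as a non-zero-divisor. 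Surjectivity would follow by a dimension count from the formula $\dim_\kk A_\bc = |[\supp(\bc)]|$ (Duval, \cite{Du}) together with the identity $\supp(\bc + \be_k) = \supp(\bc)$ when $c_k \ge 1$.

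The main technical obstacle is establishing the dimension formula $\dim_\kk A_\bc = |[\supp(\bc)]|$, or equivalently producing an explicit basis of $A_\bc$ on which $t_k$-multiplication acts transparently. I anticipate that the cleanest route uses the idempotent decomposition $A_U = \bigoplus_{x\in[U]} A_U u_x$ from Lemma~\ref{idempotent} applied with $U = \supp(\bc)$: the image of $A_\bc$ in this decomposition singles out one canonical basis element per $x \in [\supp(\bc)]$, yielding both the dimension count and the needed compatibility with $t_k$.
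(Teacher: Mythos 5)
Your proof is correct in outline, but it takes a genuinely different route from the paper's, and the two are worth contrasting. The paper's proof is two lines: regard $A$ as a squarefree module over $T = \operatorname{Sym} A_1$ (Lemma~\ref{sqf/T}), then quote \cite[Theorem~2.6]{Y} to conclude that $H^i(\cpx{\D*}_A) \cong \uExt_T^{n+i}(A, T(-\11))$ is a squarefree $T$-module, hence $\NN^n$-graded. You instead unroll that abstract input and prove the Matlis-dual statement by hand: the \v{C}ech complex $\cpx C$ has $H^j(\cpx C)_\bb = 0$ whenever some $b_k>0$, which you obtain from the decomposition of $\cpx C$ as the cone of $\cpx{\widetilde C} \to \cpx{\widetilde C}_{t_k}$ together with the fact that $t_k\colon (A_U)_\bc \to (A_U)_{\bc+\be_k}$ is bijective for $c_k\ge 1$. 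That sub-claim, once you reduce it to $U=\varnothing$, is \emph{exactly} the assertion that $A$ is a squarefree $T$-module, i.e.\ it is Lemma~\ref{sqf/T} again. So the two proofs share the same essential structural fact; the paper delegates its consequence to \cite{Y}, while you carry it through the \v{C}ech complex directly, which buys self-containedness at the cost of length.

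One remark on the internals of your sub-claim: the detour through reducedness of $A$ and the embedding $A \embto \prod_{x\ \mathrm{maximal}} A/\p_x$ is more machinery than is needed (and reducedness is not proved in the paper). The standard-monomial basis does the whole job at once: for $\bc \in \NN^n$ with $c_k \ge 1$, each standard monomial of degree $\bc$ is $t^\ua$ with $\s(\ua) \in [\supp\bc]$ and $y_k \le \s(\ua)$, and $t_k\cdot t^\ua = t^{\ua+\be_k^{\s(\ua)}}$ is again a standard monomial, of degree $\bc+\be_k$ with the same $\s$; since $\supp(\bc+\be_k)=\supp\bc$, this map of basis elements is a bijection, giving both injectivity and surjectivity simultaneously without a dimension count. (This is in effect the ``easy'' content of Lemma~\ref{sqf/T}.) With that simplification your argument is complete and correct, but it is fair to say you have recreated the squarefree-module mechanism rather than bypassed it.
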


This lemma immediately follows from Duval's description  of 
$H_\m^i(A)$ (\cite[Theorem~5.9]{Du}), but we give another proof using the notion of {\it squarefree modules}.  
This approach makes our proof more self-contained, and we will extend 
this idea in the following sections. 

Let $S=\kk[x_1, \ldots, x_n]$ be a polynomial ring, and regard it as a $\ZZ^n$-graded ring. 
For $\ba = (a_1, \ldots, a_n) \in \NN^n$, let $x^\ba$ denote  
the monomial $\prod x_i^{a_i} \in S$. 

\begin{dfn}[\cite{Y}]
With the above notation, a  $\ZZ^n$-graded $S$-module $M$ is called 
{\it squarefree}, if  it is finitely generated, $\NN^n$-graded 
(i.e., $M = \bigoplus_{\ba \in \NN^n} M_\ba$), and the multiplication map $M_\ba \ni t \longmapsto x_i t 
\in M_{\ba + \be_i}$ is bijective for all $\ba=(a_1, \ldots, a_n) \in \NN^n$ and all $i$ with $a_i > 0$. 
Here $\be_i \in \NN^n$ is the $i$th unit vector. 
\end{dfn}

The following lemma is easy, and we omit the proof. 

\begin{lem}\label{sqf/T}
Consider the polynomial ring $T:=\operatorname{Sym} A_1 \cong  \kk[t_1, \ldots, t_n]$   
(note that $T$ is {\it not}  a subring of $A$).  
Then $A$ is a squarefree $T$-module. 
\end{lem}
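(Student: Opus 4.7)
The plan is to verify the three conditions in the definition of a squarefree $T$-module: (i) $A$ is $\NN^n$-graded (the $T$-action being via the $\ZZ^n$-graded ring map $T \to A$, $t_i \mapsto t_{y_i}$); (ii) $A$ is finitely generated over $T$; and (iii) for each $\ba \in \NN^n$ with $a_i > 0$, multiplication by $t_i$ is a $\kk$-linear isomorphism $A_\ba \to A_{\ba + \be_i}$. Condition (i) is immediate because every generator $t_x$ of $A$ has degree $\11_{U_x} \in \NN^n$, where $U_x \subset [n]$ is the unique subset with $x \in [U_x]$. For (ii), the identity $\prod_{i \in U_x} t_i = \sum_{x' \in [U_x]} t_{x'}$ invoked in the proof of Lemma~\ref{idempotent}, combined with the vanishing $t_x t_{x'} = 0$ for distinct $x, x' \in [U_x]$ (which follows from $[x \vee x'] = \emptyset$), yields the monic equation $t_x^2 = \bigl(\prod_{i \in U_x} t_i\bigr) \, t_x$ in $A$. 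Hence each generator $t_x$ is integral over $T$, and as there are only finitely many of them, $A$ is module-finite over $T$.

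For (iii) I would exhibit an explicit $\kk$-basis of each graded piece. For $\ba \in \NN^n$ with $U := \supp \ba$, I claim the set
$$B_\ba := \{\, t^{\ba - \11_U} \cdot t_x \mid x \in [U]\, \}$$
(interpreted as empty when $[U] = \emptyset$, and as $\{1\}$ when $\ba = \b0$) is a $\kk$-basis of $A_\ba$. Granting this, for $i \in U$ multiplication by $t_i$ sends $B_\ba$ bijectively onto $B_{\ba + \be_i}$, since $\supp(\ba + \be_i) = U$, so (iii) follows at once.

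Linear independence of $B_\ba$ is essentially free from Lemma~\ref{idempotent}: under the localization $A \to A_U$ the element $t^{\ba - \11_U} t_x$ maps to $t^\ba \cdot u_x$, and distinct $x \in [U]$ land in distinct summands of $A_U = \bigoplus_{x \in [U]} A_U u_x$. Spanning is the technical heart: one reduces an arbitrary monomial $t_{x_1} \cdots t_{x_k}$ of $\ZZ^n$-degree $\ba$ to a $\kk$-combination of $B_\ba$ by iterated application of the defining relations $t_x t_y = t_{x \wedge y} \sum_{z \in [x \vee y]} t_z$ and $t_x t_y = 0$ (one checks that the former preserves $\ZZ^n$-degree, since $U_{x\wedge y}\cup U_z = U_x\cup U_y$ and $U_{x\wedge y} = U_x\cap U_y$ for $z\in[x\vee y]$).

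The main obstacle is making this reduction rigorous: one needs the rewriting to terminate, e.g. by inducting on a suitable complexity statistic (such as the number of factors $t_{x_j}$ with $\rho(x_j)\ge 2$). Alternatively, one can sidestep the combinatorial bookkeeping by invoking the known Hilbert-series formula for $A_P$ (\cite{St91}), from which $\dim_\kk A_\ba = |[\supp \ba]|$ drops out directly; the already-established linear independence then forces $B_\ba$ to span. Either route makes the verification of (iii) routine once $B_\ba$ is identified.
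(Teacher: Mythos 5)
The paper offers no proof of this lemma --- it says ``The following lemma is easy, and we omit the proof'' --- so there is no argument of the paper's to compare against. Your proof is essentially correct and is a reasonable reconstruction of the intended ``easy'' argument. The verification of $\NN^n$-gradedness and finite generation (via the monic relation $t_x^2 = (\prod_{i\in U_x} t_i)\,t_x$, which follows from Lemma~\ref{idempotent}) is clean, and the linear independence of $B_\ba$ via the decomposition $A_U = \bigoplus_{x\in[U]} A_U\cdot u_x$ is exactly the right use of Lemma~\ref{idempotent}.

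The one place you should tighten the wording is the spanning step: the phrase ``the known Hilbert-series formula'' is not quite what you want, since the $\ZZ$-graded Hilbert series from \cite{St91} does not by itself pin down $\dim_\kk A_\ba$ for individual $\ba\in\NN^n$. What you actually need --- and what the paper itself quotes from \cite{St91} in \S3 --- is that the \emph{standard monomials} form a $\kk$-basis of $A$. A standard monomial $\sm$ with $\s(\sm)=x$ has $\ZZ^n$-degree supported exactly on $U_x:=\{i : y_i\le x\}$, and for each $x\in[\supp\ba]$ there is exactly one standard monomial of degree $\ba$ with $\s(\sm)=x$; hence $\dim_\kk A_\ba = \#[\supp\ba] = \#B_\ba$, and linear independence forces $B_\ba$ to span. (Equivalently: the map $A_\ba \to \bigoplus_{x\in[\supp\ba]}(A/\p_x)_\ba$ is an isomorphism, each $A/\p_x$ is a polynomial ring, and multiplication by $t_i$ commutes with this decomposition; that gives (iii) directly without even writing down $B_\ba$.) With that small substitution your argument is complete; the first, rewriting-based route you sketch would need genuine work to make rigorous, so it is best to discard it in favor of the degree count.
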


\begin{rem}\label{quick proof}
Since $A$ is a squarefree $T$-module, Duval's formula on $H_\m^i(A)$ 
immediately follows from \cite[Lemma~2.9]{Y}. However, 
since $H_\m^i(A)$ has a finer ``grading" (see \cite{Du} or Corollary~\ref{-sqf} below), 
the formula will be mentioned in Corollary~\ref{reduced cohomology}.   
\end{rem}

\noindent{\it The proof of Lemma~\ref{positive}.}
Let $T$ be as in Lemma~\ref{sqf/T}. 
For  $\11 := (1,1, \ldots,1) \in \NN^n$, $T(-\11)$ is the ($\ZZ^n$-graded) canonical 
module of $T$. By the local duality theorem, we have    
$$H^i(\cpx{{}^*\!D}_A) \cong \uExt^i_A(A, \cpx{{}^*\!D}_A) \cong \uExt^{n+i}_T(A, T(-\11)).$$
By \cite[Theorem~2.6]{Y}, $\uExt^{n+i}_T(A, T(-\11))$ is a squarefree module, in particular, 
$\NN^n$-graded.  
\qed

\bigskip

\noindent{\it The proof of Theorem~\ref{main}.}
Recall the cochain map $\phi^\cdot: \cpx I_A \to \cpx{\D*}_A$ constructed 
before Lemma~\ref{positive}.   
By \eqref{phi positive}, 
$\phi^\cdot$ gives the isomorphism $\cpx I_A \cong (\cpx{\D*}_A)_{\geq \b0}$. 
Hence $\phi^\cdot$ is a quasi-isomorphism by Lemma~\ref{positive}. 
Since  $\cpx{\D*}_A \cong \cpx D_A$ in $\Db(\Mod A)$, we are done. 
\qed

\section{Squarefree Modules over $A_P$}
In this section, we will define a squarefree module over the face ring $A =A_P$ of a simplicial poset $P$.   
For this purpose, we equip $A$ with a finer ``grading", where the index set is no longer a monoid  
(similar idea has appeared in \cite{Du,OY}).

Recall the convention that $\{ \, y \in P \mid \rho(y) = 1 \, \}=\{ y_1, \ldots, y_n \}$ 
and $t_i = t_{y_i} \in A$. 
For each $x \in P$, set $$\M(x):= \bigoplus_{y_i \leq x}\NN \, \be^x_i,$$
where $\be_i^x$ is a basis element. So $\M(x) \cong \NN^{\rho(x)}$ as additive monoids. 
For $x,z$ with $x \leq z$, we have an injection 
$\iota_{z,x} : \M(x) \ni \be_i^x \mapsto \be_i^z \in \M(z)$ of monoids. 
Set $$\M := \colimit_{x \in P}\M(x),$$
where the direct limit is taken in the category of sets 
with respect to  $\i_{z, x} : \M(x) \to \M(z)$ for $x, z \in P$ with $x \leq  z$. 
Note that $\M$ is no longer a monoid. 
Since all $\i_{z,x}$  is injective, we can regard $\M(x)$ as a subset of $\M$. 
For each $\ua \in \M$, $\{  x \in P \mid \ua \in \M(x)  \}$ 
has the smallest element, which is denoted by $\s(\ua)$. 

We say a monomial $$\sm = \prod_{x \in P} t_x^{n_x} \in A \qquad  (n_x \in \NN)$$ is {\it standard}, 
if $\{ \, x \in P \mid n_x \ne 0 \, \}$ is a chain. 
In this case, set $\s(\sm):= \max \{ \, x \in P \mid n_x \ne 0 \}$.
If $n_x = 0$ for all $x \ne \hat 0$, then $\m=1$. Hence 1 is a standard monomial with $\s(1)= \hat 0$. 
As shown in \cite{St91}, the set of standard monomials forms a $\kk$-basis of $A$.

There is a one-to-one correspondence between the elements of $\M$ and the standard 
monomials of $A$. For a standard monomial $\sm$, 
set  $U := \{ \, i \in [n] \mid y_i \leq \s(\sm) \, \}$. Then we have $\s(\sm) \in [U]$.
There is $\ba \in \NN^U$ such that the image of $\sm$ in $A/\p_{\s(\sm)} 
\cong \kk[\, t_i \mid i \in U \, ]$ 
is a monomial of the form $\prod_{i \in U } t_i^{a_i}$.  So $\sm$ corresponds to 
$\ua \in \M(\s(\sm)) \, (= \bigoplus_{i \in U} \NN \, \be^{\s(\sm)}_i)  \subset \M$ 
whose $e_i^{\s(\sm)}$-component is $a_i$.  We denote this $\sm$ by $t^\ua$. 

Let $\ua, \ub \in \M$. 
If $[\s(\ua) \vee \s(\ub)] \ne \emptyset$, then we can take the sum $\ua+\ub \in \M(x)$ 
for each $x \in [\s(\ua) \vee \s(\ub)]$. 
Unless $[\s(\ua) \vee \s(\ub)]$ consists of a single element, we cannot define 
$\ua +\ub \in \M$. Hence we denote each $\ua +\ub \in \M(x)$ by $(\ua+\ub)|x$. 

\begin{dfn}
$M \in \Mod A$ is said to be {\it $\M$-graded} if the following are satisfied;
  \begin{enumerate}
  \item $M = \Dsum_{\ua \in \M}M_\ua$ as $\kk$-vector spaces;
  \item For $\ua, \ub \in \M$, we have 
$$t^\ua M_{\ub} \subset \Dsum_{x \in [\s(\ua) \vee \s(\ub)]} M_{(\ua+\ub)|x}.$$
Hence, if $[\s(\ua) \vee \s(\ub)] = \emptyset$, then $t^\ua M_{\ub}=0$. 
\end{enumerate}
\end{dfn}
 
Clearly, $A$ itself is an  $\M$-graded module with $A_\ua = \kk \, t^\ua$. 
Since there is a natural map $\M \to \NN^n$, an $\M$-graded module can be seen as an $\NN^n$-graded module. 

If $M$ is an  $\M$-graded $A$-module, then 
$$M_{\not \leq x} :=\bigoplus_{\ua \not \in \M(x)} M_\ua$$ 
is an $\M$-graded submodule for all $x \in P$, and $$M_{\leq x} :=  M/M_{\not \leq x}$$ 
is a $\ZZ^{\rho(x)}$-graded module over  $A/\p_{x} \cong \kk[\, t_i \mid y_i \leq x \, ]$. 

 \begin{dfn}
 We say an $\M$-graded $A$-module $M$ is {\it squarefree}, if $M_{\leq x}$ is a squarefree module over 
 the polynomial ring $A/\p_{x} \cong \kk[\, t_i \mid y_i \leq x \, ]$ for all $x \in P$. 
 \end{dfn}

Note that squarefree $A$-modules are automatically finitely generated, 
and can be seen as squarefree modules over $T= \Sym A_1$.

Clearly, $A$ itself, $\p_x$ and $A/\p_x$ for $x \in P$, are squarefree. 
Let $\Sq A$ be the category of squarefree $A$-modules and their 
$A$-homomorphisms $f:M \to M'$ with $f(M_\ua) \subset M'_\ua$ 
for all $\ua \in \M$. For example, $\cpx I_A$ is a complex in $\Sq A$.  
To see basic properties of $\Sq A$, we introduce  
the {\it incidence algebra} of the poset $P$ as in \cite{Y05} 
(so consult \cite{Y05} for further information). 
 
The incidence algebra $\Lambda$ of $P$ over $\kk$ 
is a finite dimensional associative $\kk$-algebra with basis
$\set{e_{x,y}}{x, y \in P,  \, x \ge y}$ whose multiplication is defined by
$$e_{x,y}\cdot e_{z,w} =\delta_{y,z} \,e_{x,w},$$
where $\delta_{y,z}$ denotes Kronecker's delta. 

Set $e_x := e_{x,x}$ for $x \in P$. 
Each $e_x$ is an idempotent, and $\Lambda e_x$ is indecomposable as a left $\Lambda$-module.
Clearly,  $e_x \cdot e_y = 0$  for $x \not= y$, and that $1_A = \sum_{x \in P} e_x$. 
Let $\mod \Lambda$ be the category of finitely generated left $\Lambda$-modules. 
As a $\kk$-vector space, $N \in \mod \Lambda$ has the decomposition  $N = \Dsum_{x \in P} e_x N$.  
Henceforth we set $N_x := e_x N$. Clearly, $e_{x, y} N_y \subset N_x,$ 
and $e_{x,y} N_z =0$ if $y \ne z$. 

For each $x \in P$,  
we can construct a left $\Lambda$-module as follows:
Set
$$
E_{\Lambda}(x) := \Dsum_{y  \in P, \ y \le x}\kk \, \bar e_y,
$$
where $\bar e_{y}$'s are basis elements. 
The module structure of $E_\Lambda(x)$ is defined by
$$
e_{z, \, w} \cdot \bar e_y = \begin{cases}
\bar e_z & \text{if $w = y$ and $z \le x $;} \\
0 & \text{otherwise.}
\end{cases}
$$
Then $E_{\Lambda}(x)$ is indecomposable and injective in $\mod \Lambda$. Conversely,  
any indecomposable injective is of this form.    
Moreover, $\mod \Lambda$ is an abelian category with enough injectives, 
and the injective dimension of each object is at most $d$.

\begin{prop}\label{sec:Sq_cat}
There is an equivalence between $\Sq A$ and $\mod \Lambda$.
Hence $\Sq A$ is an abelian category with enough injectives and 
the injective dimension of each object is at most $d$. An object $M \in \Sq A$ is 
an indecomposable injective if and only if $M \cong A/\p_x$ for some $x \in P$.
\end{prop}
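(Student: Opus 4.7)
The plan is to set up mutually quasi-inverse functors $F : \Sq A \to \mod \Lambda$ and $G : \mod \Lambda \to \Sq A$ and then transfer the three assertions of the proposition from the well-understood side $\mod \Lambda$ to $\Sq A$. For each $x \in P$, write $\xi_x := \sum_{y_i \leq x} \be_i^x \in \M(x) \subset \M$; the corresponding standard monomial is $t^{\xi_x} = t_x$. Given $M \in \Sq A$, set $F(M)_x := M_{\xi_x}$. For $y \leq x$ in $P$, the Boolean algebra $[\hat 0, x]$ provides a unique complement $z \in [\hat 0, x]$ of $y$, and axiom (2) of an $\M$-graded module yields $t_z \cdot M_{\xi_y} \subset \Dsum_{w \in [y \vee z]} M_{(\xi_y + \xi_z)|w}$. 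I define the structure map $e_{x,y} : F(M)_y \to F(M)_x$ as multiplication by $t_z$ followed by projection to the $w = x$ summand.

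For the inverse, given $N \in \mod \Lambda$ and $\ua \in \M$, let $y(\ua)$ be the unique element of $[\hat 0, \s(\ua)]$ whose atom set equals $\supp(\ua)$ in the Boolean identification (this is well-defined precisely because $[\hat 0, \s(\ua)]$ is Boolean); then set $G(N)_\ua := N_{y(\ua)}$. The $A$-action on $G(N)$ is built componentwise: for $\ua, \ub \in \M$ and each $w \in [\s(\ua) \vee \s(\ub)]$, the contribution of $t^\ub$ to $G(N)_{(\ua+\ub)|w}$ is the structure map $e_{y((\ua+\ub)|w),\,y(\ua)}$ of $N$ when $y(\ua) \leq y((\ua+\ub)|w)$, and zero otherwise. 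Checking that $F$ and $G$ are quasi-inverse comes down to two observations: the squarefree hypothesis on $M_{\leq \s(\ua)}$ over $A/\p_{\s(\ua)} \cong \kk[t_i \mid y_i \leq \s(\ua)]$ yields a canonical isomorphism $M_\ua \cong M_{\xi_{y(\ua)}}$, giving $G \circ F \cong \idmap$, while $y(\xi_x) = x$ immediately gives $F \circ G \cong \idmap$.

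Once the equivalence is established, the structural assertions follow by transfer. The category $\mod \Lambda$ is abelian with enough injectives since $\Lambda$ is a finite-dimensional $\kk$-algebra, and the bound $d$ on injective dimensions together with the classification of indecomposable injectives as the $E_\Lambda(x)$ recalled in the paragraphs preceding the proposition are standard facts about incidence algebras, which may be cited from \cite{Y05}. To match the indecomposable injectives of $\Sq A$ with the $A/\p_x$, I would compute directly that $(A/\p_x)_{\xi_y} = \kk$ for $y \leq x$ and $0$ otherwise using the standard monomial basis of $A/\p_x \cong \kk[t_i \mid y_i \leq x]$, and verify that the structure maps agree with those defining $E_\Lambda(x)$, obtaining $F(A/\p_x) \cong E_\Lambda(x)$.

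The main obstacle is verifying that the projection prescription really produces an associative $\Lambda$-action, i.e., $e_{x,y} \circ e_{y,z} = e_{x,z}$ along chains $z \leq y \leq x$. Since $\M$ is not a monoid, the product $t_{c(x,y)} \cdot t_{c(y,z)}$ in $A$ (where $c(u,v)$ denotes the complement of $v$ in $[\hat 0, u]$) expands as a sum over $[c(x,y) \vee c(y,z)]$, only one of whose elements is $c(x,z)$; one must argue that after the two successive projections only the $c(x,z)$-term contributes. This should ultimately follow from associativity of multiplication in $A$ combined with the transitivity of complements inside the Boolean algebra $[\hat 0, x]$, and a symmetric analysis is required on the $G$ side.
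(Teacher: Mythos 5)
The broad strategy (mutually inverse functors $F$ and $G$ between $\Sq A$ and $\mod\Lambda$, then transfer the structure theory from $\mod\Lambda$) is the same as the paper's, but you make a different choice of degree, and that choice creates the gap you yourself flag at the end. You define $F(M)_x := M_{\xi_x}$ with $\xi_x = (1,\dots,1)$, i.e.\ the degree of $t_x$. The paper instead uses $\ua(x) := (\rho(x),\dots,\rho(x))$, and explicitly remarks that the ``simpler definition'' $(1,\dots,1)$ does \emph{not} work for its construction: the point is that $\ua(x)-\ua(y)$ then has every coordinate positive, so $\s(\ua(x)-\ua(y)) = x$, $[x\vee y]=\{x\}$, and $t^{\ua(x)-\ua(y)}\cdot t^{\ua(y)}$ is a single monomial with no projection needed. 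Associativity $e_{x,y}\circ e_{y,z}=e_{x,z}$ then falls out directly from the module axioms. With $\xi_x$, the complement $z := c(x,y)$ has $\s(\xi_z) = z < x$, so $[z\vee y]$ may have several elements and you must project; this is exactly what the paper's parenthetical warning is about.

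The projection version can in fact be made to work, but the argument you label ``the main obstacle'' is genuinely nontrivial and you do not carry it out, on either the $F$ side or the $G$ side. What is needed is this: if $v\in[c(y,z)\vee z]$, $v\ne y$, then $x\not\ge v$ (otherwise $v$ and $y$ would be two distinct joins of $c(y,z)$ and $z$ inside the Boolean algebra $[\hat 0,x]$, which is impossible), so $t_{c(x,y)}\cdot s_v$ has no component in degree $\xi_x$; and similarly, if $w\in[c(x,y)\vee c(y,z)]$, $w\ne c(x,z)$, then $x\not\ge w$. Only after both of these observations does the identity $t_{c(x,y)}\cdot\bigl(t_{c(y,z)}\cdot s\bigr)=\bigl(t_{c(x,y)}t_{c(y,z)}\bigr)\cdot s$ localize in degree $\xi_x$ to give $e_{x,y}\circ e_{y,z}=e_{x,z}$. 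Your appeal to ``transitivity of complements'' does not supply this; the crucial input is the uniqueness of joins in $[\hat 0,x]$, used to kill the off-diagonal terms. Likewise, on the $G$ side the paper checks carefully that $(t^\ua t^\ub)s$ and $t^\ua(t^\ub s)$ both equal the same triple sum; your proposal only says ``a symmetric analysis is required.'' In short, the route is a legitimate variant, but as written it stops exactly at the step the paper engineers its definition to avoid, so the proof is incomplete.
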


\begin{proof}
Let $N \in \mod \Lambda$. 
For each $\ua \in \M$, we assign a $\kk$-vector space $M_\ua$ with a bijection  
$\mu_\ua : N_{\s(\ua)} \to M_\ua$. 
We put  an $\M$-graded $A$-module structure on $M:= \bigoplus_{\ua \in \M} M_\ua$ by 
$$t^{\ua} \, s = \sum_{x \in [\s(\ua) \vee \s(\ub)]} \mu_{(\ua+\ub)|x} ( 
e_{x, \s(\ub)}  \cdot  \mu^{-1}_{\ub}(s) ) \quad 
\text{for $s \in M_\ub$.}$$
To see that $M$ is actually an $A$-module, note that both $(t^{\ua} \, t^{\ub}) \, s$ 
and $t^{\ua}\,  (t^{\ub} \, s)$ equal 
$$\sum_{x \in [\s(\ua) \vee \s(\ub) \vee \s(\uc)]} 
\mu_{(\ua+\ub+\uc )|x} ( 
e_{x, \s(\uc)}  \cdot  \mu^{-1}_{\uc}(s) ) \quad 
\text{for $s \in M_\uc$.}$$
We can also show that $M$ is squarefree. 

To construct the inverse correspondence, for $x \in P$ with $r = \rho(x)$, 
set $\ua(x) := (r,r, \ldots, r) \in \NN^r \cong \M(x) \subset \M$. 
If $x \geq y$, then there is a degree $\ua(x)-\ua(y) \in \M(x) \subset \M$ such that 
$t^{\ua(x)-\ua(y)} \cdot t^{\ua(y)} = t^{\ua(x)}$.  (One might think a simpler definition 
 $\ua(x) := (1,1, \ldots, 1) \in \NN^r$ works. However this is not true. 
In this case, the candidate of $\ua(x)-\ua(y)$ belongs to $\M(z)$ for some 
$z \in P$ with $z < x$.  So $(\ua(x)-\ua(y)) +\ua(y)$ does not exist,  
unless $\# [y \vee z] =1$.)
Now we can construct $N \in \mod \Lambda$ from $M \in \Sq A$ as follows:  
Set $N_x := M_{\ua(x)}$, and define the multiplication map $N_y \ni s \mapsto e_{x,y} \cdot s \in N_x$ 
by $M_{\ua(y)} \ni s \mapsto t^{\ua(x)-\ua(y)} s \in M_{\ua(x)}$ for $x, y \in P$ with $x \geq y$. 

By this correspondence, we have $\Sq A \cong \mod \Lambda$. 
For the last statement,  note that $E_\Lambda(x) \in \mod \Lambda$ corresponds to $A/\p_x \in \Sq A$.  
\end{proof}

Let $\InjSq$ be the full subcategory of $\Sq A$ consisting of all injective objects, that is, 
finite direct sums of copies of $A/\p_x$ for various $x \in P$. 
As is well-known, the bounded homotopy category $\Cb(\InjSq)$ is equivalent to $\Db(\Sq A)$.  
Since $$\uHom_A(A/\p_x, A/\p_y) = \begin{cases} 
A/\p_y & \text{if $x \geq y$,}\\
0 & \text{otherwise,}
\end{cases}$$
we have $\uHom_A^\bullet(\cpx J, \cpx I_A) \in \Cb(\InjSq)$ for all $\cpx J \in \Cb(\InjSq)$. 
Moreover, $\uHom_A^\bullet(-, \cpx I_A)$ 
gives a functor $$\DD: \Cb(\InjSq) \to \Cb(\InjSq)^\op.$$ 

\begin{prop}
Via the forgetful functor  $\for: \InjSq \to \gr A$,  $\DD$ coincides with $\RuHom_A (-, \cpx{\D*}_A)$. 
More precisely, we have a natural isomorphism 
$$\Phi: \for \circ \DD \stackrel{\cong}{\longrightarrow} \RuHom_A (-, \cpx{\D*}_A) \circ \for.$$ 
Here both $\for \circ  \DD$ and $\RuHom_A (-, \cpx{\D*}_A) \circ \for$ are functors from 
$\Cb(\InjSq)$ to $\Db(\gr A)$.  
\end{prop}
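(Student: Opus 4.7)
The plan is to construct $\Phi$ from the quasi-isomorphism $\phi^\bullet : \cpx I_A \to \cpx{\D*}_A$ used to prove Theorem~\ref{main}, and then to verify $\Phi$ is a quasi-isomorphism by recycling the $\NN^n$-gradedness argument behind Lemma~\ref{positive}. First I would observe that for any $M, N \in \InjSq$ the natural map $\for \uHom_A(M, N) \to \uHom_A(\for M, \for N)$ is an isomorphism: on indecomposables $M = A/\p_x$, $N = A/\p_y$ both sides are computed as $A/\p_y$ when $x \ge y$ and as $0$ otherwise. Thus termwise on $\cpx J \in \Cb(\InjSq)$ there is a natural identification $\for \uHom_A^\bullet(\cpx J, \cpx I_A) \cong \uHom_A^\bullet(\for \cpx J, \for \cpx I_A)$. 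Post-composing with $\phi^\bullet$ and using that the bounded complex $\cpx{\D*}_A$ consists of $\gr A$-injectives produces the natural map
$$\Phi_{\cpx J} : \for\DD(\cpx J) \cong \uHom_A^\bullet(\for \cpx J, \for \cpx I_A) \xrightarrow{\phi^\bullet_*} \uHom_A^\bullet(\for \cpx J, \cpx{\D*}_A) \simeq \RuHom_A(\for \cpx J, \cpx{\D*}_A).$$

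To check $\Phi_{\cpx J}$ is a quasi-isomorphism I would argue two points. First, in each $\ZZ^n$-degree $\ba \in \NN^n$ the comparison $\phi^\bullet_*$ is already an isomorphism of $\kk$-complexes: every term of $\for \cpx J$ is a direct sum of copies of $A/\p_y$ and hence $\NN^n$-graded, so any graded homomorphism from $J^i$ into $E_A(x)(\ba)$ with $\ba \in \NN^n$ lands automatically in $(E_A(x))_{\ge \b0}(\ba) = \phi_x(A/\p_x)(\ba)$ by \eqref{phi positive}. Second, both cohomologies are supported in $\NN^n$: on the source this is automatic because every term $\uHom_A(J^i, I_A^j)$ is $\NN^n$-graded, while on the target one reduces by d\'evissage on the length of $\cpx J$ to the case $\cpx J = A/\p_y$, whereupon the identification $\uExt_A^i(A/\p_y, \cpx{\D*}_A) \cong \uExt_T^{n+i}(A/\p_y, T(-\11))$ together with \cite[Theorem~2.6]{Y} exhibits this Ext as a squarefree $T$-module, hence an $\NN^n$-graded one. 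Combining the two points, $\Phi_{\cpx J}$ is an isomorphism on every $\NN^n$-graded cohomology degree, while both cohomologies vanish outside $\NN^n$; so $\Phi_{\cpx J}$ is a quasi-isomorphism. Naturality in $\cpx J$ is automatic because $\Phi$ is induced by the fixed chain map $\phi^\bullet$.

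The principal obstacle is the first point above: one must rule out graded homomorphisms from $\for \cpx J$ into the strictly negative $\ZZ^n$-graded part of any $E_A(x)$. Once this is settled via \eqref{phi positive} and the $\NN^n$-support of $\for \cpx J$, the $\NN^n$-gradedness of the right-hand Ext is essentially the argument already used to prove Lemma~\ref{positive}, so no new ingredient is required.
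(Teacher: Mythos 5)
Your proposal is correct and follows essentially the same route as the paper: construct $\Phi$ from the chain map $\phi^\bullet$, observe that termwise $\Phi_{\cpx J}$ identifies $\for\DD(\cpx J)$ with the $\NN^n$-graded part of $\uHom_A^\bullet(\for\cpx J, \cpx{\D*}_A)$ via \eqref{phi positive}, and conclude by showing the cohomology of the target is $\NN^n$-graded through the local duality identification $\RuHom_A(-,\cpx{\D*}_A) \cong \RuHom_T(-,\cpx{\D*}_T)$ together with the squarefree $T$-module argument of Lemma~\ref{positive}. The only cosmetic differences are that you spell out the termwise comparison more explicitly and use a d\'evissage on the length of $\cpx J$ where the paper appeals directly to the squarefree-ness of the hyper-Ext; both are equivalent bookkeeping.
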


\begin{proof}
The cochain map $\phi^\bullet: \cpx I_A \to \cpx {\D*}_A$ induces the natural transformation 
$\Phi$. It remains to prove that 
$\Phi(\cpx J): \DD(\cpx J) \to  \RuHom_A (\cpx J, \cpx{\D*}_A)$ is a quasi isomorphism 
for all $\cpx J \in \Cb(\InjSq)$. 
For this fact, we use a similar argument to the final steps of the previous section 
(while the the same argument as the proof of \cite[Proposition~5.4]{OY} also works). 
Note that $\cpx J$ is a complex of squarefree modules over the polynomial ring $T:= \Sym A_1$. 
Since $\RuHom_A (\cpx J, \cpx{\D*}_A) \cong \RuHom_T (\cpx J, \cpx{\D*}_T)$ by the local duality theorem,   
the cohomologies of $\RuHom_A (\cpx J, \cpx{\D*}_A)$ are squarefree $T$-modules, 
in particular, $\NN^n$-graded. On the other hand, through $\Phi$, $\DD(\cpx J)$ 
is isomorphic to the $\NN^n$-graded part of $\RuHom_A (\cpx J, \cpx{\D*}_A)$.  
\end{proof}

\begin{rem}
By the equivalence $\Cb(\InjSq) \cong \Db(\Sq A)$, $\DD$ can be regarded as 
a contravariant functor from $\Db(\Sq A)$ to itself.  
Then,  through the equivalence $\Sq R \cong \mod \Lambda$, $\DD$ coincides with the functor 
$\DDD: \Db(\mod \Lambda) \to \Db(\mod \Lambda)^\op$ defined in \cite{Y05} 
up to translation. 
Hence, for $\cpx M \in \Db(\Sq A)$, the complex $\DD(\cpx M)$ has the following description:  
The term of cohomological degree $p$ is 
$$
\DD(\cpx M)^p := \Dsum_{i + \rho(x) = -p} (M^i_{\ua(x)})^* \tns_\kk A/\p_x,
$$
where $(-)^*$ denotes the $\kk$-dual, and $\ua(x) \in \M(x) \subset \M$ is the one defined in the proof of 
Proposition~\ref{sec:Sq_cat}. 
The differential is given by 
$$
 (M^i_{\ua(x)})^* \tns_\kk A/\p_x \ni f \otimes 1_{A/\p_x} \longmapsto 
\sum_{\substack{y \le x,\\ \rho(y)= \rho(x) - 1}} 
\epsilon(x,y) \cdot f_y \tns 1_{A/\p_y} + 
(-1)^p \cdot f \circ \partial^{i-1}_{\cpx M} \tns 1_{A/\p_x}, 
$$
where $f_y \in (M_{\ua(y)})^*$ denotes 
$M_{\ua(y)} \ni s \mapsto f(t^{\ua(x)-\ua(y)} \cdot s) \in \kk$, and $\epsilon(x,y)$ 
is the incidence function.
We also have $\DD \circ \DD \cong \idmap_{\Db(\Sq A)}$. 
\end{rem}

Since $H^{-i}(\DD(M)) \cong \uExt_A^{-i}(M, \cpx{\D*}_A) \cong H_\m^i(M)^\vee$ in $\Gr A$, 
we have the following. 

\begin{cor}\label{-sqf}
If $M \in \Sq A$, then 
the local cohomology $H_\m^i(M)^\vee$ can be seen as a squarefree $T$-module. 
\end{cor}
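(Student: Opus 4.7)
The plan is to read the corollary as a transport of structure along the isomorphism $H_\m^i(M)^\vee \cong H^{-i}(\DD(M))$ in $\Gr A$ recalled just above, and then to verify that the right-hand side is visibly squarefree over the polynomial ring $T = \Sym A_1$ of Lemma~\ref{sqf/T}. First I would use that $M \in \Sq A$ implies $\DD(M) \in \Cb(\InjSq)$, so each term $\DD(M)^p$ is a finite direct sum of modules of the form $A/\p_x$ with $x \in P$.

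Next I would check that every such $A/\p_x$, viewed as a $T$-module via the natural ring map $T \to A$, is a squarefree $T$-module. Indeed, $t_j \in T$ acts as zero on $A/\p_x$ whenever $y_j \not\leq x$, since then $t_j \in \p_x$ in $A$; thus the $T$-action factors through $T/(t_j \mid y_j \not\leq x)$, which is the Stanley--Reisner ring of the simplex on $\{y_i : y_i \leq x\}$ and so a textbook squarefree $T$-module. Invoking the fact from \cite{Y} that the squarefree $T$-modules form an abelian subcategory of $\Gr T$ closed under kernels and cokernels, I conclude that each cohomology $H^{-i}(\DD(M))$, being a subquotient in $\Gr T$ of a squarefree $T$-module, is itself squarefree over $T$.

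Finally, since the isomorphism $H_\m^i(M)^\vee \cong H^{-i}(\DD(M))$ is $A$-linear and $\ZZ^n$-graded, it is automatically $T$-linear and grading-preserving, so $H_\m^i(M)^\vee$ inherits a squarefree $T$-module structure. The only mildly delicate point is the identification of the $T$-action on $A/\p_x$ with a Stanley--Reisner quotient of $T$ (and in particular that this action is well-defined even though $T$ is not a subring of $A$); once this is settled, the argument is a direct consequence of Lemma~\ref{sqf/T}, the abelian-subcategory property of squarefree $T$-modules, and the duality $H^{-i}(\DD(M)) \cong H_\m^i(M)^\vee$ already established.
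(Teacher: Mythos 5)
Your proof is correct, and it takes a somewhat different route from the paper's. The paper treats this corollary as an immediate by-product of the proof of the Proposition just before it: there one shows $\RuHom_A(\cpx J, \cpx{\D*}_A) \cong \RuHom_T(\cpx J, \cpx{\D*}_T)$ via local duality over $T = \Sym A_1$, and then invokes \cite[Theorem~2.6]{Y} (Ext of a squarefree $T$-module into $T(-\11)$ is squarefree) to conclude that the cohomologies are squarefree over $T$; applying this to an injective resolution of $M$ and using local duality over $A$ gives the corollary. You instead stay entirely on the $\DD$ side: you observe that the terms $A/\p_x$ of $\DD(M) \in \Cb(\InjSq)$ are, via $T \to A$, Stanley--Reisner quotients $T/(t_j \mid y_j \not\leq x)$ and hence squarefree over $T$, that the differentials are $T$-linear and $\ZZ^n$-degree-preserving, and that squarefree $T$-modules form an abelian subcategory of $\Gr T$ closed under kernels and cokernels (this is \cite[Lemma~2.3]{Y} rather than Theorem~2.6). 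Both arguments use Proposition~\ref{sec:Sq_cat}/the Proposition identifying $\for \circ \DD$ with $\RuHom_A(-, \cpx{\D*}_A)$ together with $\ZZ^n$-graded local duality over $A$ to get $H_\m^i(M)^\vee \cong H^{-i}(\DD(M))$; the difference is in how squarefreeness of the right-hand side is seen. Your version avoids the change-of-rings to $\RuHom_T(-, \cpx{\D*}_T)$ and so is a touch more elementary and self-contained, at the cost of the explicit (but easy) verification that each $A/\p_x$ is a squarefree $T$-module, which the paper uses only implicitly. The one step you flag as "mildly delicate" is in fact harmless: $T$ acts on $A$ through the ring map $\Sym A_1 \to A$, so $A/\p_x$ is a $T$-module by restriction, and $t_j \in \p_x$ precisely when $y_j \not\leq x$, giving the Stanley--Reisner description you state.
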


\section{Sheaves and Poincar\'e-Verdier duality}
The results in this section are parallel to those in \cite[Section 6]{OY} 
(or earlier work \cite{Y03}). 
Although the relation between the rings treated there and our $A_P$ is not so direct, 
the argument is very similar. So we omit the detail of some proofs here.

Recall that a simplicial poset $P$ gives a regular cell complex $\Gamma(P)$.   
Let $X$ be the underlying space of $\Gamma(P)$, and $c(x)$ the open cell 
corresponding to $\hat{0} \ne x \in P$. 
Hence, for each $x \in P$ with $\rho(x) \geq 2$, $c(x)$ is an open subset of $X$ homeomorphic to 
$\RR^{\rho(x)-1}$ (if $\rho(x)=1$, then $c(x)$ is a single point), 
and $X$ is the disjoint union of the cells $c(x)$. 
Moreover, $x \geq y$ if and only if $\overline{c(x)} \supset c(y)$.  

As in the preceding section, let $\Lambda$ be the incidence algebra of $P$, 
and $\mod \Lambda$ the category of finitely generated left $\Lambda$-modules.   
In \cite{Y05}, we assigned the constructible sheaf $N^\dagger$ on $X$ to $N \in \mod \Lambda$.  
By the equivalence $\Sq A \cong \mod \Lambda$, we have the constructible sheaf $M^+$ on $X$ 
corresponding to $M \in \Sq A$. 
Here we give a precise construction for the reader's convenience.   
For the sheaf theory, consult \cite{Iver}.

For $M \in \Sq A$, set $$\sp(M) 
:= \bigcup_{\hat{0} \ne x \in P} c(x) \times M_{\ua(x)},$$
where $\ua(x) \in \M(x) \subset \M$ is the 
one defined in the proof of Proposition~\ref{sec:Sq_cat}. 
Let $\pi : \sp(M) \to X$ be the projection map which sends $(p, m) \in 
c(x) \times M_{\ua(x)} \subset \sp(M)$ to $p \in c(x) \subset X$. 
For an open subset $U \subset X$ and a map $s: U \to \sp(M)$, 
we will consider the following conditions:

\begin{itemize}
\item[$(*)$]  $\pi \circ s = \idmap_{U}$ and $s_p = t^{\ua(x)-\ua(y)} 
\cdot s_q$ for all $p \in c(x) \cap U$, $q \in c(y) \cap U$ with $ x \geq y$. 
Here $s_p \in M_{\ua(x)}$ (resp. $s_q \in M_{\ua(y)}$) is an element with  
$s(p) = (p, s_p)$ (resp.  $s(q) = (q, s_q)$).  
\item[$(**)$] There is an open covering $U = \bigcup_{i \in I} 
U_i$ such that the restriction of $s$ to $U_i$ satisfies $(*)$ for all $i \in I$. 
\end{itemize}

Now we define a sheaf $M^+$ on $X$ as follows:
For an open set $U \subset X$, set 
$$M^+(U):= 
\{ \, s \mid \text{$s: U \to \sp(M)$ is a map satisfying $(**)$} \,\}$$
and the restriction map $M^+(U) \to M^+(V)$ for $U \supset V$ is the natural one. 
It is easy to see that $M^+$ is a constructible sheaf with respect to the cell decomposition 
$\Gamma(P)$. For example, $A^+$ is the $\kk$-constant sheaf $\const_X$ on $X$, 
and $(A/\p_x)^+$ is (the extension to $X$ of) the $\kk$-constant sheaf on the closed cell 
$\overline{c(x)}$.

Let $\Sh(X)$ be the category of sheaves of finite dimensional 
$\kk$-vector spaces on $X$. The functor $(-)^+: \Sq A \to \Sh(X)$ is exact. 

As mentioned in the previous section, 
$\DD:\Db(\Sq A) \to \Db(\Sq A)^\op$ corresponds to 
$\TT \circ \mathbf D: \Db(\mod \Lambda) \to \Db(\mod \Lambda)^\op$, 
where $\mathbf D$ is the one defined in \cite{Y05}, and $\TT$ 
is the translation functor (i.e., $\TT(\cpx M)^i = M^{i+1}$). 
Through $(-)^\dagger: \mod \Lambda \to \Sh(X)$, 
$\DDD$ gives the Poincar\'e-Verdier duality on $\Db(\Sh(X))$, 
so we have the following.

\begin{thm}\label{Verdier}
For $\cpx M \in \Db(\Sq A)$, we have 
$$\TT^{-1} \circ \DD(\cpx M)^+  \cong \RcHom((\cpx M)^+, \cDx)$$
in $\Db(\Sh(X))$.  In particular, $\TT^{-1}((\cpx I_A)^+) \cong \cDx$, where 
$\cpx I_A$ is the complex constructed in Theorem~\ref{main}, and $\cDx$ is 
the  Verdier dualizing complex of $X$ with the coefficients in $\kk$. 
\end{thm}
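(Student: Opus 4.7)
The plan is to transport the statement, via the equivalence $\Sq A \cong \mod \Lambda$ and the correspondence $(-)^+ \leftrightarrow (-)^\dagger$, to a statement about $\Lambda$-modules and then invoke the Poincar\'e-Verdier type duality for $(-)^\dagger$ proved in \cite{Y05}. Concretely, if $\cpx N \in \Db(\mod \Lambda)$ is the complex corresponding to $\cpx M \in \Db(\Sq A)$, then by the remark immediately preceding the theorem $\DD$ corresponds to $\TT \circ \DDD$ under this equivalence, and exactness of $(-)^\dagger$ (already recorded in \cite{Y05}) ensures that it commutes with the translation functor. One therefore obtains
$$\TT^{-1}\bigl(\DD(\cpx M)^+\bigr) \;\cong\; \TT^{-1}\bigl((\TT \circ \DDD)(\cpx N)\bigr)^\dagger \;\cong\; \DDD(\cpx N)^\dagger.$$

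The main duality result of \cite{Y05} now supplies a natural isomorphism $\DDD(\cpx N)^\dagger \cong \RcHom((\cpx N)^\dagger, \cDx)$, and together with the identification $(\cpx N)^\dagger \cong (\cpx M)^+$ this yields the first assertion. For the ``in particular'' claim, I would specialise to $\cpx M = A$, so that $\DD(A) = \uHom_A^\bullet(A, \cpx I_A) \cong \cpx I_A$, and invoke the observation from the previous section that $A^+ = \const_X$. The first part of the theorem, together with the standard identification $\RcHom(\const_X, \cDx) \cong \cDx$, then gives $\TT^{-1}((\cpx I_A)^+) \cong \cDx$.

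The only point that genuinely requires care is the compatibility between the two sheafification functors $(-)^+$ on $\Sq A$ and $(-)^\dagger$ on $\mod \Lambda$; since the paper explicitly declines to spell this section out in full, I would isolate this step by checking it on the injective generators $A/\p_x \leftrightarrow E_\Lambda(x)$, where both sides visibly coincide with the extension by zero of the $\kk$-constant sheaf on $\overline{c(x)}$, and then extend to all of $\Db(\Sq A)$ using the equivalence $\Cb(\InjSq) \simeq \Db(\Sq A)$ together with the exactness of $(-)^+$. Verifying that the natural transformation produced on generators assembles into one that is genuinely compatible with the differentials of $\cpx I_A$ (i.e., carries the incidence signs $\epsilon(x,y)$ correctly) is the step I expect to be the main bookkeeping obstacle.
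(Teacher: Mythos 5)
Your argument matches the paper's approach exactly: transport through $\Sq A\cong\mod\Lambda$, use that $\DD$ corresponds to $\TT\circ\DDD$, and invoke the Poincar\'e--Verdier duality for $(-)^\dagger$ from \cite{Y05}; the specialization $\cpx M = A$ for the second claim is also how one reads off $\TT^{-1}((\cpx I_A)^+)\cong\cDx$. The one point you flag as the ``main bookkeeping obstacle''---compatibility of $(-)^+$ with $(-)^\dagger$---is in fact a non-issue here, since the paper \emph{defines} $(-)^+$ as the composite of the equivalence $\Sq A\cong\mod\Lambda$ with $(-)^\dagger$ (the explicit $\sp(M)$ construction is only given ``for the reader's convenience''), so no check on generators or with the incidence signs $\epsilon(x,y)$ is required.
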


The next result follows from results in \cite{Y05} 
(see also  \cite[Theorem~6.2]{OY}). 

\begin{thm}\label{local cohomology}
For $M \in \Sq A$, we have the decomposition $H_\m^i(M) 
= \Dsum_{\ua \in \M} H_\m^i(M)_{-\ua}$ by Corollary~\ref{-sqf}. 
Note that $\M$ has the element $\b0$. Then the following hold.  
\begin{itemize}
\item[(a)] There is an isomorphism  
$$H^i(X, M^+) \cong H_\m^{i+1}(M)_{\b0} \quad  \text{for all $i \geq 1$},$$ 
and an exact sequence 
$$0 \to H_\m^0(M)_{\b0} \to M_{\b0} \to H^0( X, M^+) \to H_\m^1(M)_{\b0} \to 0.$$
\item[(b)] If $\b0 \ne \ua \in \M$ with $x = \s(\ua)$, then 
$$H_\m^i(M)_{-\ua} \cong H^{i-1}_c(U_x, M^+|_{U_x})$$
for all $i \geq 0$. Here $U_x = \bigcup_{z \geq x} c(z)$ is an open set of $X$, and 
$H^\bullet_c(-)$ stands for the cohomology with compact support. 
\end{itemize}
\end{thm}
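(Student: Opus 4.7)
The plan is to deduce both parts from the local duality (which gives $H_{\m}^{i}(M)_{-\ua} \cong H^{-i}(\DD(M))_{\ua}^{*}$ as $\kk$-vector spaces for every $\ua \in \M$) together with the Poincar\'e--Verdier duality furnished by Theorem~\ref{Verdier}; this is the strategy of \cite[\S6]{OY}, which ultimately derives from \cite{Y05}. From the explicit description of $\DD$ given in the Remark, $\DD(M)^{p} = \Dsum_{\rho(x) = -p} M_{\ua(x)}^{*} \tns_{\kk} A/\p_{x}$, and since $(A/\p_{x})_{\ua} = \kk$ when $\s(\ua) \le x$ and $0$ otherwise, the $\ua$-graded complex $(\DD(M))_{\ua}$ has $p$th term $\Dsum_{\substack{\rho(x) = -p \\ x \ge \s(\ua)}} M_{\ua(x)}^{*}$.

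The geometric input is that for $y \ne \hat{0}$ the sheaf $(A/\p_{y})^{+} = \kk_{\overline{c(y)}}$ is $\Gamma(X,-)$-acyclic (the closed simplex $\overline{c(y)}$ is contractible), and for $x_{0} = \s(\ua) \ne \hat{0}$ its restriction $(A/\p_{y})^{+}|_{U_{x_{0}}} = \kk_{\overline{c(y)} \cap U_{x_{0}}}$ is $\Gamma(U_{x_{0}},-)$-acyclic when $y \ge x_{0}$ (the intersection is the open star of $c(x_{0})$ in $\overline{c(y)}$, again contractible) and zero when $y \not\ge x_{0}$. Hence the hypercohomology of $\DD(M)^{+}$, resp.\ of $\DD(M)^{+}|_{U_{x_{0}}}$, is computed by termwise global sections, and a direct inspection identifies $\Gamma(U_{x_{0}}, (A/\p_{y})^{+}|_{U_{x_{0}}})$ with $(A/\p_{y})_{\ua}$ compatibly with the differentials (both being governed by the incidence function $\epsilon$). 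This yields an isomorphism of complexes $(\DD(M))_{\ua} \cong \Gamma(U_{x_{0}}, \DD(M)^{+}|_{U_{x_{0}}})$ for $\ua \ne \b0$, while for $\ua = \b0$ the analogous comparison produces a short exact sequence
\begin{equation*}
0 \to M_{\b0}^{*}[0] \to (\DD(M))_{\b0} \to \Gamma(X, \DD(M)^{+}) \to 0,
\end{equation*}
where the left term reflects the single discrepancy $(A/\m)_{\b0} = \kk$ versus $(A/\m)^{+} = 0$ in cohomological degree zero (the extra summand $x = \hat{0}$ of $\DD(M)^{0}$).

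Poincar\'e--Verdier duality (Theorem~\ref{Verdier}), together with the compactness of $X$ and the identification $\mathcal{D}_{U_{x_{0}}} = \cDx|_{U_{x_{0}}}$ for the open inclusion $U_{x_{0}} \hookrightarrow X$, yields
\begin{equation*}
H^{i}(X, M^{+})^{*} \cong H^{-i-1}(X, \DD(M)^{+}), \qquad H^{i-1}_{c}(U_{x_{0}}, M^{+}|_{U_{x_{0}}})^{*} \cong H^{-i}(U_{x_{0}}, \DD(M)^{+}|_{U_{x_{0}}}),
\end{equation*}
the shifts coming from $\TT^{-1}((\cpx I_{A})^{+}) \cong \cDx$. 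For part (b), combining this with the termwise isomorphism and dualizing gives $H_{\m}^{i}(M)_{-\ua} \cong H^{i-1}_{c}(U_{x_{0}}, M^{+}|_{U_{x_{0}}})$ directly. For part (a), the long exact sequence associated to the above short exact sequence gives isomorphisms $H^{j}((\DD(M))_{\b0}) \cong H^{j}(\Gamma(X, \DD(M)^{+}))$ for $j \le -2$ (yielding $H^{i}(X, M^{+}) \cong H_{\m}^{i+1}(M)_{\b0}$ for $i \ge 1$ after dualizing), and a four-term exact sequence involving $M_{\b0}^{*}$ in cohomological degrees $-1$ and $0$; using $H^{0}(X, \DD(M)^{+}) = H^{-1}(X, M^{+})^{*} = 0$ and dualizing produces the stated four-term exact sequence.

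The principal technical obstacle is verifying the termwise compatibility between the differential of $(\DD(M))_{\ua}$ and that of $\Gamma(-, \DD(M)^{+})$, especially for $\ua \ne \b0$ where the passage through $\Gamma(U_{x_{0}}, -)$ must be tracked carefully. This rests on the fact that the incidence function $\epsilon$ of Theorem~\ref{main} governs simultaneously the differentials of $\cpx I_{A}$ and of the Verdier dualizing complex of the regular CW complex $\Gamma(P)$; the full verification was carried out in \cite{Y05} in the equivalent language of incidence-algebra modules and constructible sheaves on $\Gamma(P)$, and transfers to the present setting via Proposition~\ref{sec:Sq_cat}.
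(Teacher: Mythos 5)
The paper itself gives no proof for this theorem beyond citing \cite{Y05} and \cite[Theorem~6.2]{OY}; your argument is a correct and careful unfolding of exactly that route. Your key steps all check out: the identification $(\DD(M))_\ua^p = \bigoplus_{\rho(x)=-p,\, x\ge\s(\ua)} M_{\ua(x)}^*$ via $(A/\p_x)_\ua \ne 0 \iff \s(\ua)\le x$; the $\Gamma$-acyclicity of $\kk_{\overline{c(y)}}$ on $X$ and of $\kk_{\overline{c(y)}\cap U_{x_0}}$ on $U_{x_0}$ (closed, contractible subsets); the short exact sequence isolating the single discrepancy $(A/\m)^+ = 0$ versus $(A/\m)_{\b0}=\kk$ in cohomological degree zero; and the degree bookkeeping through $\TT^{-1}((\cpx I_A)^+)\cong\cDx$, local duality $H_\m^i(M)_{-\ua}\cong H^{-i}(\DD(M))_\ua^*$, and the resulting four-term sequence once one uses $H^0(X,\DD(M)^+)\cong H^{-1}(X,M^+)^*=0$. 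Your deferral of the differential-compatibility check to \cite{Y05} (transported via the equivalence of Proposition~\ref{sec:Sq_cat}) is the same move the paper makes, so this is essentially the paper's approach with the details spelled out.
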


Let  $\rH^i(X;\kk)$ denote  the $i$th {\it reduced cohomology} of 
$X$ with coefficients in $\kk$. That is, $\rH^i(X;\kk) \cong H^i(X;\kk)$ for all $i \geq 1$, and 
 $\rH^0(X;\kk) \oplus \kk \cong H^0(X;\kk)$, where $H^i(X;\kk)$  is  the usual 
cohomology of $X$. 

\begin{cor}[Duval {\cite[Theorem~5.9]{Du}}]\label{reduced cohomology}
We have $$[H_\m^i(A)]_{\b0} \cong \rH^{i-1}(X; \kk) \quad  \text{and} \quad 
[H_\m^i(A)]_{-\ua} \cong H^{i-1}_c(U_x;\kk)$$ for all $i \geq 0$
and all $\b0 \ne \ua \in \M$ with $x = \s(\ua)$.  

For this $\ba \in \M$ (but $\ba$ can be $\b0$ here), 
$[H_\m^i(A)]_{-\ua}$ is also isomorphic to the $i$th cohomology of the cochain complex 
$$K_x^\bullet: 0 \to K_x^{\rho(x)} \to K_x^{\rho(x)+1} \to \cdots \to K_x^d \to 0 \quad \text{with} \quad 
K_x^i = \Dsum_{\substack{z \geq x \\ \rho(z)=i}} \kk \,b_z$$
($b_z$ is a basis element) whose differential map is given by 
$$b_z \longmapsto \sum_{\substack{w \geq z \\ \rho(w)=\rho(z)+1}} \epsilon(w,z) \, b_w.$$
\end{cor}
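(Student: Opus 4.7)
The first isomorphism of the corollary follows directly from Theorem~\ref{local cohomology} applied to $M = A$, since by construction $A^+$ is the constant sheaf $\const$ on $X$. For $\ua \ne \b0$, part~(b) of that theorem gives $[H_\m^i(A)]_{-\ua} \cong H^{i-1}_c(U_x;\kk)$ at once. For $\ua = \b0$ and $i \ge 2$, part~(a) yields $[H_\m^i(A)]_{\b0} \cong H^{i-1}(X;\kk) = \rH^{i-1}(X;\kk)$; the values $i=0,1$ are handled by the four-term exact sequence in part~(a), using $A_{\b0} = \kk \cdot 1$, the canonical splitting $H^0(X;\kk) = \kk \oplus \rH^0(X;\kk)$, and the vanishing of $H_\m^0(A)$ when $\dim A \ge 1$. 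The degenerate case $P = \{\hat{0}\}$ (i.e., $X = \emptyset$) is checked by hand.

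For the combinatorial description via $\cpx K_x$, my plan is to combine Theorem~\ref{main} with local duality. Since $\cpx I_A$ represents the $\ZZ^n$-graded normalized dualizing complex of $A$, one obtains the $\M$-graded identification $[H_\m^i(A)]_{-\ua}^* \cong H^{-i}(\cpx I_A)_\ua$. The computation then reduces to pinning down the $\ua$-graded slice of $\cpx I_A$. From the description of standard monomials, $t^\ua$ projects nontrivially to $A/\p_z$ exactly when $\s(\ua) \le z$, and in that case spans the one-dimensional space $(A/\p_z)_\ua$. Writing $x := \s(\ua)$, the $\ua$-component of $\cpx I_A$ therefore has basis $\{\, 1_{A/\p_z}|_\ua \,:\, \rho(z) = i,\, z \ge x\,\}$ in cohomological degree $-i$, which matches $K_x^i$ as a $\kk$-vector space via $1_{A/\p_z}|_\ua \leftrightarrow b_z$.

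What remains is to compare the two differentials. Restricted to the $\ua$-piece, $\partial_{\cpx I_A}$ sends $1_{A/\p_z}|_\ua$ to $\sum \epsilon(z,y)\, 1_{A/\p_y}|_\ua$ over corank-one $y \le z$ with $y \ge x$ (other $y$'s drop out because $(A/\p_y)_\ua = 0$), whereas $d_{\cpx K_x}$ sends $b_z$ to $\sum \epsilon(w,z)\, b_w$ over corank-one $w \ge z$. These two matrices are transposes of one another, so $\cpx K_x$ is the $\kk$-linear dual of the chain complex $(\cpx I_A)_\ua$, with the cohomological index flipped. Taking cohomology then yields $H^i(\cpx K_x) \cong (H^{-i}(\cpx I_A)_\ua)^* \cong [H_\m^i(A)]_{-\ua}$, as desired. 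The main obstacle is the $\M$-graded bookkeeping: one must correctly identify which summands of $\cpx I_A$ survive in the $\ua$-graded component and track how the differential transposes under $\kk$-duality; fortunately $\epsilon$ enters symmetrically on both sides, so no sign adjustments are required.
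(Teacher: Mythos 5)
Your proof is correct and follows essentially the same route as the paper: the topological isomorphisms are read off from Theorem~\ref{local cohomology} applied to $M=A$ (with the $i=0,1$ cases and the degenerate case $P=\{\hat 0\}$ handled via the four-term exact sequence exactly as you indicate), and the combinatorial description is obtained by identifying $\cpx K_x$ with the $\kk$-linear dual of the $\ua$-graded slice of $\cpx I_A$, which is precisely the paper's observation that $(\DD(A)^\vee)_{-\ua}=\cpx K_x$ combined with $[H_\m^i(A)]_{-\ua}\cong (H^{-i}(\cpx I_A)_\ua)^*$. You simply make explicit the bookkeeping (which summands $A/\p_z$ survive in degree $\ua$, and that the two incidence-matrix differentials are transposes) that the paper leaves implicit.
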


Duval uses the latter description, and he denotes $H^i(\cpx K_x)$ by 
$H^{i-\rho(x)-1}(\operatorname{lk} x)$.

\begin{proof}
The former half follows from Theorem~\ref{local cohomology} by the same argument as
\cite[Corollary~6.3]{OY}. The latter part follows from that $H_\m^i(A) \cong H^{-i}(\DD(A))^\vee$ and 
that $(\DD(A)^\vee)_{-\ua} = \cpx K_x$ as complexes of $\kk$-vector spaces. 
\end{proof}

\begin{thm}[c.f. Duval \cite{Du}]\label{CM & Gor} 
Set $d:=\rank P = \dim X +1$. Then we have the following. 
\begin{itemize}
\item[(a)]  
$A$ is Cohen-Macaulay if and only if $\cH^i(\cDx) = 0$ for all $i \ne -d+1$, and  
$\rH^i(X;\kk)=0$ for all $i \ne d-1$. 
\item[(b)] Assume that $A$ is Cohen-Macaulay and $d \geq 2$. Then 
$A$ is Gorenstein*, if and only if $\cH^{-d+1}(\cDx) \cong \const_X$.  
(When $d =1$, $A$ is Gorenstein* if and only if $X$ consists of exactly two points.)  
\item[(c)] $A$ is Buchsbaum if and only if  $\cH^i(\cDx) = 0$ for all $i \ne -d+1$. 
\item[(d)] Set $$d_i := \begin{cases} \dim (\supp 
\cH^{-i} (\cDx)) & \text{if $\cH^{-i} (\cDx) \ne 0$,}\\
-1 & \text{if $\cH^{-i} (\cDx) = 0$ and $\rH^i(X;\kk) \ne 0$,}\\
-\infty & \text{if $\cH^{-i} (\cDx) = 0$ and $\rH^i(X;\kk) = 0$.}
\end{cases}
$$ 
Here $\supp \cF = \{ \, p \in X \mid \cF_p \ne 0 \, \}$ for a sheaf $\cF$ on $X$.
Then, for $r \geq 2$, $A$ satisfies Serre's condition  
$(S_r)$ if and only if $d_i \leq i-r$ for all $i < d-1$.  
\end{itemize}
Hence, Cohen-Macaulay (resp. Gorenstein*, Buchsbaum) property 
and Serre's condition $(S_r)$ of $A$ are
topological properties  of $X$, while they may depend on $\chara(\kk)$. 
\end{thm}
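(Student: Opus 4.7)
The overall plan is to translate each algebraic property (CM, Gorenstein*, Buchsbaum, $(S_r)$) into a vanishing/structure condition on $H_\m^\bullet(A)$, split the $\M$-graded decomposition from Corollary~\ref{-sqf} into its $\b0$ and non-$\b0$ components, and interpret each component topologically using Corollary~\ref{reduced cohomology} together with the identification $\cH^i(\cDx) \cong (H^{i-1}(\cpx I_A))^+$ that falls out of Theorem~\ref{Verdier}. The stalk at $p \in c(x)$ of $\cH^{1-i}(\cDx)$ is, by exactness of $(-)^+$, the graded piece $[H^{-i}(\cpx I_A)]_{\ua(x)}$, which is $\kk$-dual to $[H_\m^i(A)]_{-\ua}$ for any $\ua$ with $\s(\ua) = x$.

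For (a), $A$ is Cohen-Macaulay iff $H_\m^i(A) = 0$ for $i < d$. The $\b0$ summand is $\rH^{i-1}(X;\kk)$, and demanding its vanishing for $i < d$ is $\rH^j(X;\kk) = 0$ for $j \ne d-1$. The non-$\b0$ summands $H^{i-1}_c(U_x;\kk)$ vanish for all $x \ne \hat{0}$ and $i < d$ iff $\cH^j(\cDx) = 0$ for $j \ne -d+1$, noting that $\cH^0(\cDx)$ and $\cH^1(\cDx)$ vanish automatically (a short direct computation on the stalk complex $(\cpx I_A)^+_p$, which also matches the expected range of the Verdier dualizing complex on the $(d-1)$-dimensional space $X$). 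Part (c) uses the identical split: Buchsbaum amounts to $[H_\m^i(A)]_\ba = 0$ for all $\ba \ne \b0$ and $i < d$, i.e.\ exactly the stalk condition from (a) with the reduced-cohomology constraint dropped.

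For (d), invoke the standard characterization that $A$ satisfies $(S_r)$ iff $\dim_A H_\m^i(A) \le i-r$ for $i < d$. The $\b0$-piece $\rH^{i-1}(X;\kk)$ is $\m$-torsion, giving $A$-support of dimension $0$ and corresponding to $d_{i-1} = -1$; the non-$\b0$ pieces sheafify to $\cH^{1-i}(\cDx)$ whose $A$-support dimension is one more than its $X$-support dimension (since $\dim A/\p_x = \rho(x) = \dim \overline{c(x)} + 1$). The three-case definition of $d_i$ accounts for both contributions, and after the reindexing the condition $\dim_A H_\m^{i+1}(A) \le (i+1)-r$ becomes exactly $d_i \le i-r$. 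For (b), CM implies that $\omega_A := H^{-d}(\cpx I_A)$ is the unique nonzero cohomology of $\cpx I_A$, so $\cH^{-d+1}(\cDx) \cong (\omega_A)^+$; if $A$ is Gorenstein* then $\omega_A \cong A$ in $\Sq A$, giving $\cH^{-d+1}(\cDx) \cong A^+ = \const_X$. Conversely, $\cH^{-d+1}(\cDx) \cong \const_X$ together with CM yields $\cDx \cong \const_X[d-1]$, and the Poincar\'e-Verdier identity $\mathbb H^i(X;\cDx) \cong H^{-i}(X;\kk)^*$ on the compact space $X$ forces Poincar\'e duality, so $X$ is a $\kk$-homology $(d-1)$-sphere; this gives $[\omega_A]_{\b0} = \rH^{d-1}(X;\kk)^* \cong \kk$, and the equivalence $\Sq A \cong \mod \Lambda$ of Proposition~\ref{sec:Sq_cat} then pins down $\omega_A \cong A$.

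The main obstacle is (b): the functor $(-)^+$ discards the value at $\hat{0}$, so the sheaf-level isomorphism $\cH^{-d+1}(\cDx) \cong \const_X$ alone does not force $\omega_A \cong A$; one must use the CM hypothesis and the homology-sphere argument above to control $[\omega_A]_{\b0}$ and only then invoke $\Sq A \cong \mod \Lambda$. In (d), the distinction between sheaf support on $X$ and module support on $\opn{Spec} A$, differing in dimension by one, together with the various index shifts, requires some care. The final clause of the theorem (dependence only on the topology of $X$ and on $\chara(\kk)$) is then immediate from the topological nature of all the invariants appearing on the right-hand side of each characterization.
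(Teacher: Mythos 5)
Your overall strategy --- translating each property into conditions on $H_\m^\bullet(A)$, splitting into $\b0$ and non-$\b0$ graded pieces, and interpreting these via Corollary~\ref{reduced cohomology} and Theorem~\ref{Verdier} --- is exactly the approach the paper has in mind; the paper itself delegates (a)--(c) to \cite[Theorems~6.4 and 6.7]{OY} and proves (d) precisely by the reindexing $d_j = \dim H^{-j-1}(\cpx I_A)-1$ together with Lemma~\ref{S_r iikae}. Your stalk identification and the dimension bookkeeping for (d) (the shift by one between $\dim A/\p_x = \rho(x)$ and $\dim \overline{c(x)} = \rho(x)-1$) are correct, and your treatment of (a) and (c) is sound.

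There are two points to flag. First, a small factual error: you assert that $\cH^0(\cDx)$ vanishes automatically, but it does not --- e.g.\ if $\Gamma(P)$ has an isolated vertex $y_i$, then $[\cH^{-1}(\cpx I_A)]_{\ua(y_i)} \neq 0$, hence $\cH^0(\cDx) \neq 0$. What \emph{is} automatic (for $d \geq 1$, since $A$ is reduced hence $H^0_\m(A)=0$) is the vanishing of $\cH^1(\cDx)$. This does not harm the biconditional in (a), but the parenthetical justification should be corrected. Second, and more seriously, the backward direction of (b) has a genuine gap at the final step. After you establish $[\omega_A]_{\b0}\cong\kk$ and $\omega_A^+\cong\const_X$, you claim ``the equivalence $\Sq A \cong \mod\Lambda$ then pins down $\omega_A\cong A$.'' But this data does \emph{not} determine $\omega_A$ as an object of $\mod\Lambda$: knowing $[\omega_A]_{\ua(x)}\cong\kk$ for every $x$ (including $\hat0$) and that the structure maps $e_{x,y}$ are isomorphisms for $\hat0 \neq y \le x$ leaves the maps $e_{y_i,\hat0}\colon [\omega_A]_{\b0}\to[\omega_A]_{\ua(y_i)}$ completely undetermined. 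If those were all zero one would have $[\omega_A]_{\b0}\subset\Gamma_\m(\omega_A)$, a genuinely different $\Lambda$-module from $A$. What rules this out is the CM hypothesis: $\depth\omega_A = d \geq 2 > 0$ forces $\Gamma_\m(\omega_A) = 0$, and the connectedness of $X$ (which also follows from CM, since $\rH^0(X;\kk)=0$) then propagates nonvanishing of one $e_{y_i,\hat0}$ to all of them via the compatibility relations $e_{z,y_i}e_{y_i,\hat0} = e_{z,y_j}e_{y_j,\hat0}$ along edges. Equivalently, one can invoke the exact sequence of Theorem~\ref{local cohomology}(a): $H^0_\m(\omega_A)_{\b0} = H^1_\m(\omega_A)_{\b0} = 0$ forces the canonical map $[\omega_A]_{\b0}\to H^0(X,\omega_A^+)\cong\kk$ to be an isomorphism, which is precisely the statement that the structure maps from $\hat0$ are nonzero. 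Only with this additional argument does the equivalence $\Sq A\cong\mod\Lambda$ yield $\omega_A\cong A$. You do correctly identify that ``the functor $(-)^+$ discards the value at $\hat0$'' as the obstacle, but controlling $\dim_\kk[\omega_A]_{\b0}$ alone does not close it; the depth/connectivity argument is the missing ingredient.
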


As far as the author knows, even in the Stanley-Reisner ring case, 
(d) has not been mentioned in literature yet. 

Recall that we say $A$ satisfies Serre's condition $(S_r)$ if 
$\depth A_\p \geq \min\{ \, r, \, \hght \p \, \}$ for all prime ideal $\p$ of $A$.  
The next fact is well-known to the specialist, but we will sketch the proof here 
for the reader's convenience. 

\begin{lem}\label{S_r iikae}
For $r \geq 2$, $A$ satisfies the condition $(S_r)$ if and only if 
$\dim H^{-i} (\cpx I_A)\leq i-r$ for all $i < d$.  
Here the dimension of the 0 module is $-\infty$. 
\end{lem}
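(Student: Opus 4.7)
The plan is to convert $(S_r)$, which is a condition on $\depth A_\p$ at every prime $\p$, into a support condition on the cohomology of $\cpx I_A$ by combining Theorem~\ref{main} with local duality.

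Fix a prime $\p \subset A$ and set $s = \dim A/\p$. I would start by matching the normalized dualizing complex of $A_\p$ with the localization $(\cpx I_A)_\p$: since $\cpx I_A$ is a dualizing complex for $A$ concentrated in cohomological degrees $[-d,0]$, and since $A$ is equidimensional and catenary (so $\dim A_\p = d - s$ and the top cohomology of $(\cpx I_A)_\p$ lives in degree $-d$), a degree count yields $(\cpx I_A)_\p \cong \cpx D_{A_\p}[s]$. Combining this with the local duality formula $\depth A_\p = \min\{k \mid H^{-k}(\cpx D_{A_\p}) \neq 0\}$ gives the key identity
$$
 \depth A_\p \;=\; -s \;-\; \max\bigl\{\, j \,\bigm|\, \p \in \supp H^j(\cpx I_A) \,\bigr\}. \qquad (\star)
$$

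For the forward direction, assume $(S_r)$ and fix $i < d$. Any $\p \in \supp H^{-i}(\cpx I_A)$ with $s = \dim A/\p$ satisfies $\depth A_\p \le i - s$ by $(\star)$, while $(S_r)$ gives $\depth A_\p \ge \min\{r, d-s\}$. The subcase $d-s \le r$ would force $d \le i$, contradicting $i < d$; therefore $r \le i - s$, i.e.\ $s \le i - r$, and taking the supremum over such $\p$ shows $\dim H^{-i}(\cpx I_A) \le i - r$. Conversely, assume the dimension bounds and fix a prime $\p$. Let $m$ denote the maximum in $(\star)$. If $m = -d$, then $(\star)$ gives $\depth A_\p = d - s = \dim A_\p$, so $A_\p$ is Cohen-Macaulay and $(S_r)$ trivially holds at $\p$; otherwise $i := -m < d$ and $\p \in \supp H^{-i}(\cpx I_A)$, so the hypothesis yields $s \le i - r$ and $(\star)$ gives $\depth A_\p = i - s \ge r$, again forcing $(S_r)$ at $\p$.

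The delicate point is the localization shift $(\cpx I_A)_\p \cong \cpx D_{A_\p}[s]$, which implicitly uses that $A$ is equidimensional and catenary so that $\dim A_\p + \dim A/\p = d$; this is the standing assumption one should verify (or restrict attention to) at the outset, after which $(\star)$ and the two-case analysis above are essentially bookkeeping.
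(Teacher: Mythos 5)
Your key identity $(\star)$ is exactly the paper's equation~\eqref{depth form}, and your two-case bookkeeping in each direction matches the paper's. So the approach is the same; the problem lies in the step you yourself flag at the end and then leave open.

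The genuine gap is the reduction to the equidimensional (i.e.\ pure) case. You write ``this is the standing assumption one should verify (or restrict attention to) at the outset,'' but the lemma is stated for an arbitrary simplicial poset $P$, and $A_P$ need not be equidimensional. Your forward direction uses $\hght \p = d - s$ to replace $(S_r)$'s lower bound $\min\{r,\hght\p\}$ by $\min\{r,d-s\}$; if $\hght\p + s < d$ the subcase $\hght\p < r$ only gives $A_\p$ Cohen--Macaulay, from which one cannot conclude $s \le i-r$. Your converse direction's Case~1 also silently uses $d-s = \dim A_\p$. What is needed, and what the paper supplies, is an argument that \emph{both} sides of the equivalence already force purity: $(S_r)$ with $r \ge 2$ implies $(S_2)$, which implies $P$ is pure; and if the dimension bounds hold then for a minimal prime $\p_F$ with $i := \dim A/\p_F < d$ one has $\depth A_{\p_F} = 0$, so by $(\star)$ the prime $\p_F$ lies in $\supp H^{-i}(\cpx I_A)$, giving $\dim H^{-i}(\cpx I_A) \ge i > i-r$, a contradiction. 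Once this is in place, one may assume $P$ pure and your computation closes the argument. Without it, the proposal is incomplete.

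A smaller point: you attribute the shift $(\cpx I_A)_\p \cong \cpx D_{A_\p}[s]$ (where $s = \dim A/\p$) to equidimensionality, but in fact it holds unconditionally. Since $A$ is a quotient of the polynomial ring $S$, which is Cohen--Macaulay, one has $\cpx D_{A_\p} \cong \RHom_{S_{\widetilde{\p}}}\bigl(A_\p, \cpx D_{S_{\widetilde{\p}}}\bigr)$ and the shift needed to normalize $(\cpx D_S)_{\widetilde{\p}}$ is by $-\dim S/\widetilde{\p} = -\dim A/\p$, with no hypothesis on $A$. Your ``degree count'' via ``the top cohomology of $(\cpx I_A)_\p$ lives in degree $-d$'' is both unnecessary and, in the non-pure case, false (localize at $\p_F$ for a facet $F$ of non-maximal rank). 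So $(\star)$ is correct in full generality; equidimensionality is needed only for the later identification $\hght\p = d - s$, which is precisely why the purity reduction cannot be skipped.
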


\begin{proof}
For a prime ideal $\p$, the normalized dualizing complex of $A_\p$ is quasi-isomorphic to 
$\TT^{-\dim A/\p}(\cpx I_A \otimes_A A_\p).$
Hence we have 
\begin{equation}\label{depth form}
\depth A_\p = \min \{ \, i \mid (H^{-i}(\cpx I_A) \otimes_A A_\p) \ne 0 \, \}-\dim A/\p.
\end{equation} 
Recall that if $A$ satisfies Serre's condition $(S_2)$ 
then $P$ is {\it pure} (equivalently, $\dim A/\p = d$ for all minimal prime ideal $\p$). 
Similarly, if $\dim H^{-i} (\cpx I_A) <i$ for all $i < d$, 
then $P$ is pure. (In fact, if $\p$ is a minimal prime ideal of $A$ 
with $i:= \dim A/\p <d$, then $\depth A_\p = 0$ implies that $\p$ is a minimal prime 
of $H^{-i}(\cpx I_A)$. It follows that $\dim H^{-i}(\cpx I_A)=i$. 
This is a contradiction.)
So we may assume that $P$ is pure, and hence $\dim A/\p + \hght \p =d$ for all $\p$. 
Now the assertion follows from \eqref{depth form}.    
\end{proof}

\noindent{\it The proof of Theorem~\ref{CM & Gor}.}
We can prove (a)--(c) by the same way as \cite[Theorems~6.4 and 6.7]{OY}.
For (d), note that $d_j = \dim H^{-j-1}(\cpx I_A) -1$. 
So the assertion follows from Lemma~\ref{S_r iikae}. 
\qed



\section{Further discussion}
This section is a collection of miscellaneous results related to the arguments in the previous sections.  

\medskip

For an integer $i \leq d-1$, the poset 
$$P^{(i)} := \{ \, x \in P \mid \rho(x) \leq i+1 \, \}$$
is called the $i$-{\it skeleton} of $P$. Clearly, $P^{(i)}$ is simplicial again, and set $A^{(i)} := A_{P^{(i)}}$.  
Then it is easy to see that the (Theorem~\ref{main} type) dualizing complex 
$\cpx I_{A^{(i)}}$ of $A^{(i)}$ coincides with the brutal truncation 
$$0 \to I_A^{-i-1} \to I_A^{-i} \to \cdots \to I_A^0 \to 0$$
of $\cpx I_A$. 
Since $\depth A=  \min \{ \, i \mid H^{-i}(\cpx I_A) \ne 0 \, \}$ and 
$\dim A =   \max \{ \, i \mid H^{-i}(\cpx I_A) \ne 0  \, \}$,  
we have the equation  
\begin{equation}\label{skeleton}
\depth A_P = 1+ \max \{\, i \mid \text{$A^{(i)}$ is Cohen-Macaulay}\, \}, 
\end{equation}
which is \cite[Corollary~6.5]{Du}.

\medskip

Contrary to the Gorenstein* property, the Gorenstein property of $A_P$ is {\it not} topological. 
This phenomena occurs even for Stanley-Reisner rings. 
But there is a characterization of $P$ such that $A_P$ is Gorenstein. 
For posets $P_1, P_2$, we regard $P_1 \times P_2 = \{ \, (x_1, x_2) \mid 
x_1 \in P_1, \, x_2 \in P_2 \, \}$ as a poset by 
$(x_1, x_2) \geq (y_1, y_2) \stackrel{\text{def}}{\Longleftrightarrow}$ 
$x_i \geq y_i$ in $P_i$ for each $i=1,2$.

\begin{prop}\label{Gorenstein}
$A_P$ is Gorenstein if and only if $P \cong 2^V \times Q$ as posets for 
a boolean algebra $2^V$ and a simplicial poset $Q$ with $A_Q$ is Gorenstein*. 
\end{prop}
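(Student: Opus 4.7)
The plan is to prove the two directions separately, with the harder direction being ``only if''. For the ($\Leftarrow$) direction, I would first check the ring-level isomorphism $A_{2^V \times Q} \cong \kk[t_1, \ldots, t_m] \otimes_\kk A_Q$ with $m = |V|$: using the identities $x \wedge y = (S_1 \cap S_2, q_1 \wedge q_2)$ and $[x \vee y] = \{(S_1 \cup S_2, z) : z \in [q_1 \vee q_2]_Q\}$ for $x = (S_1, q_1), y = (S_2, q_2) \in 2^V \times Q$, the Stanley relations defining $A_{2^V \times Q}$ split into the relations of $A_{2^V} \cong \kk[t_1, \ldots, t_m]$ and those of $A_Q$. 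Since $\omega_{A_{2^V}} \cong A_{2^V}(-m)$, the Gorenstein* hypothesis $\omega_{A_Q} \cong A_Q$ yields $\omega_{A_P} \cong A_P(-m)$, so $A_P$ is Gorenstein.

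For the ($\Rightarrow$) direction, call $y_i \in P$ a \emph{cone point} if $[y_i \vee x]$ is a singleton for every $x \in P$; equivalently, multiplication by $t_i$ carries each standard monomial of $A_P$ to a standard monomial rather than a sum. I would proceed by induction on the number of rank 1 elements of $P$. If $P$ has a cone point $y_i$, let $P' := \{x \in P : y_i \not\leq x\}$, a simplicial subposet with one fewer rank 1 element; the singleton-join property gives a poset isomorphism $2^{\{y_i\}} \times P' \to P$ sending $(\emptyset, x) \mapsto x$ and $(\{y_i\}, x) \mapsto$ the unique element of $[y_i \vee x]$. This yields $A_P \cong \kk[t_i] \otimes_\kk A_{P'}$, whence $A_{P'}$ is Gorenstein by flat descent. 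Applying the inductive hypothesis, $P' \cong 2^{V'} \times Q$ with $A_Q$ Gorenstein*, and combining gives $P \cong 2^{\{y_i\} \sqcup V'} \times Q$.

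The induction base, and the main obstacle, is when $P$ has no cone points: here one must show that $A_P$ Gorenstein forces $A_P$ Gorenstein*. My plan is to combine Corollary~\ref{reduced cohomology} with the Gorenstein hypothesis $\omega_{A_P} \cong A_P(-a)$: the isomorphism $(\omega_{A_P})_0 \cong \rH^{d-1}(X_P, \kk)$ shows that Gorenstein* is equivalent to $a = 0$, so it suffices to rule out $a \geq 1$ under the no-cone-point assumption. I would attempt this by analyzing the cyclic degree-$a$ generator $\omega = (\omega_x)_{\rho(x) = d} \in \omega_{A_P} \subseteq I^{-d}_{A_P}$: each $\omega_x \in (A_P / \p_x)_a$ is a nonzero polynomial of degree $a$, and the kernel conditions $\partial^{-d} \omega = 0$ together with the cyclicity of $\omega_{A_P}$ should force some variable $t_{y_i}$ to divide every $\omega_x$, after which a combinatorial argument should promote $y_i$ to a cone point, yielding the contradiction. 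A cleaner sheaf-theoretic alternative I would explore via Theorem~\ref{Verdier}: the positive-shift condition $a \geq 1$ translates into a cone-like structure of the Verdier dualizing complex on $X_P$, which should be incompatible with the absence of cone points.
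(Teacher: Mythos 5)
Your $(\Leftarrow)$ direction agrees with the paper (which disposes of it in one line, noting $A_P$ is a polynomial ring over $A_Q$), and your ring-level verification of $A_{2^V\times Q}\cong \kk[t_1,\dots,t_m]\otimes_\kk A_Q$ is a reasonable fleshing-out.

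For $(\Rightarrow)$, the inductive outer frame (peel off cone points one at a time, using flat descent to keep the Gorenstein hypothesis) is sound, and your notion of ``cone point'' $y_i$ with $\#[y_i\vee x]=1$ for all $x$ is exactly the condition the paper ends up isolating. But the entire difficulty has been pushed into the base case---``Gorenstein with no cone points implies Gorenstein*''---and there you have a plan, not a proof. The phrases ``should force some variable $t_{y_i}$ to divide every $\omega_x$'' and ``a combinatorial argument should promote $y_i$ to a cone point'' name the desired conclusion without giving an argument, and the sheaf-theoretic alternative is even vaguer. The specific gap: you work only with the coarse $\ZZ$-grading, so the Gorenstein hypothesis gives you $\omega_{A_P}\cong A_P(-a)$ for some integer $a$, and a $\ZZ$-degree-$a$ element of $\bigoplus_{\rho(x)=d} A/\p_x$ carries essentially no combinatorial information; there is no visible mechanism producing a single variable dividing all the components $\omega_x$. (You also do not rule out $a<0$.)

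The missing ingredient is the $\ZZ^n$-graded squarefree structure. The paper uses Lemma~\ref{sqf/T} to view $A$ as a squarefree $T$-module, $T=\operatorname{Sym}A_1$, and invokes \cite[Corollary~2.4]{Y}: the minimal $\ZZ^n$-graded $T$-free resolution of $A$ has only squarefree shifts $T(-F)$, $F\subset[n]$. Dualizing against $T(-\11)$ then shows that the $\ZZ^n$-graded canonical module satisfies $\omega_A\cong A(-V)$ for a squarefree vector $V\subset[n]$---a much stronger statement than $\omega_A\cong A(-a)$. Once $V$ is in hand, the constraint ``$\beta_{i,F}\ne 0\Rightarrow F\subset[n]\setminus V$'' forces $[V]$ to be a singleton and forces $\#[x'\vee z]=1$ for all $x'\in 2^V$, $z\in Q$, which is precisely your cone-point condition for every $i\in V$. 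So your cone-point dichotomy is correct, but the argument that actually produces the cone points (or shows $V=\emptyset$) runs through squarefree modules and the $\ZZ^n$-grading, which your proposal never brings into play. Without it, the base case remains open.
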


\begin{proof}
The  sufficiency is clear. In fact, if $P \cong 2^V \times Q$ then $A:=A_P$ is 
a polynomial ring over $A_Q$. So it remains to prove the necessity.  

By Lemma~\ref{sqf/T}, $A$ is a squarefree module over the polynomial ring 
$T := \operatorname{Sym} A_1$. 
We say $\ba=(a_1, \ldots, a_n) \in \NN^n$ is {\it squarefree}, if $a_i =0,1$ for all $i$. 
If this is the case, we identify $\ba$ with its support $\{ \, i \mid a_i = 1 \, \}\subset [n]$. 
Hence a subset $F \subset [n]$ sometimes means the corresponding squarefree vector 
in $\NN^n$.  

Since $A$ is Gorenstein (in particular, Cohen-Macaulay) now, 
a minimal $\ZZ^n$-graded $T$-free resolution of $A$ is of the form 
$$L_\bullet: 0 \to L_{n-d} \to \cdots \to L_1 \to L_0 \to 0 \quad 
\text{with} \quad L_i = \bigoplus_{F \subset [n]}T(-F)^{\beta_{i,F}}$$
by \cite[Corollary~2.4]{Y}.

Let  $\11 := (1,1, \ldots,1) \in \NN^n$.  Note that $\uHom_T^\bullet(L_\bullet, T(-\11))$ is 
a minimal $\ZZ^n$-graded $T$-free resolution of the canonical module 
$\omega_A =\uExt_T^{n-d}(A,T(-\11))$ of $A$ up to translation, 
and $\omega_A \cong A(-V)$ for some 
$V \subset [n]$.  Set $W := [n] \setminus V$. 
Since $\uHom_T(T(-F), T(-\11)) \cong T(-([n] \setminus F))$, 
we have the following:

\medskip

$(*)$ If $\beta_{i, F} \ne 0$ for some $i$, then $F \subset W$.  

\medskip

If $[V] = [\bigvee_{i \in V}y_i] = \emptyset$, then 
by the construction of $A$, there is some $F \subset V$ with $\beta_{1,F} \ne 0$, 
and it contradicts to the statement $(*)$. 
Even if $\# [V] \geq 2$, the same contradiction occurs. 
Hence $[V] = \{ x \}$ for some $x \in P$. 
We denote the closed interval $[\hat{0},x]$ by $2^V$. 

Set 
$$Q:= \{ \, z \in P \mid \text{$z \not \geq y_i$ for all $i \in V$} \, \}  
= \coprod_{U \subset W} [U].$$  
If $\# [x' \vee z] \ne 1$ for some $x' \in 2^V$ and $z \in Q$, 
then $\beta_{1,F} \ne 0$ for some $F$ with 
$F \cap V \ne \emptyset$, and it contradicts to $(*)$. 
Hence, for all $x' \in 2^V$ and $z \in Q$, we have $\# [x' \vee z] =1$. 
Denoting the element of $[x' \vee z]$ by $x' \vee z$,  
we have an order preserving map 
$$\psi: 2^V \times Q \ni (x', z) \longmapsto x' \vee z \in P.$$ 
Moreover, since $P = \coprod_{U \subset[n]} [U]$, $\psi$ is an isomorphism of posets, 
and we have
$$P \cong 2^V \times Q.$$ 

Clearly, $Q$ is a simplicial poset. Set $B:=A_Q$. 
Since $A \cong B[\, t_i \mid i \in V \, ]$ and  
$$A(-V) \cong \omega_A \cong (\omega_B[ \, t_i \mid i \in V \,])(-V),$$
$B$ is Gorenstein*. 
\end{proof}

Let $\Sigma$ be a finite regular cell complex with the underlying topological space $X(\Sigma)$, 
and  $Y,Z \subset X(\Sigma)$ closed subsets with $Y \supset Z \ne \emptyset$. 
Set $U := Y \setminus Z$, and let $h: U \embto Y$ be the embedding map. 
We can define the Cohen-Macaulay property of the pair $(Y,Z)$, which generalizes 
the Cohen-Macaulay property of a relative simplicial complex introduced in \cite[III. \S 7]{St}. 
See Lemma~\ref{relative CM lem} below. 

\begin{dfn}\label{relative CM def}
We say the pair $(Y,Z)$ is {\it Cohen-Macaulay} (over $\kk$), if $H_c^i(U;\kk)=0$ for all 
$i \ne \dim U$ and ${\rm R}^ih_* \cDu=0$ for all $i \ne -\dim U$. Here $\cDu$ is 
the  Verdier dualizing complex of $U$ with the coefficients in $\kk$. 
\end{dfn}



We say an ideal $I \subset A$ is {\it squarefree}, if it is generated by a subset of  
$\{\, t_x \mid x \in P \, \}$. Clearly, an ideal $I$ is a squarefree 
submodule of $A$ if and only if it is a squarefree ideal. 
For a  squarefree ideal $I$, $\s(I):= \{ \, x \in P \mid t_x \in I \, \}$ is 
an order filter (i.e., $x \in \s(I)$ and $y \geq x$ imply $y \in \s(I)$), and 
$U_I:= \bigcup_{x \in \s(I)} c(x)$ is an open set of $X$.  
The sheaf $I^+$ is (the extension to $X$ of) the $\kk$-constant sheaf on $U_I$.

\begin{prop}\label{relative CM lem} 
(1) A squarefree ideal $I$ with $I \subsetneq \m$ is a Cohen-Macaulay module 
if and only if  $(\overline{U_I}, \, \overline{U_I} \setminus U_I)$ is 
Cohen-Macaulay in the sense of Definition~\ref{relative CM def}. 

(2) The {\it sequentially Cohen-Macaulay} (see \cite[III. Definition~2.9]{St}) 
property of $A$ depends only on $X$ (and $\chara (\kk)$). 
\end{prop}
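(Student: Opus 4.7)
The plan for (1) is to use Theorems~\ref{local cohomology} and~\ref{Verdier} to translate both sides into the same vanishing statements for compactly supported cohomology of $U_I$ and the open intersections $U_I\cap U_x$. Set $U := U_I$, $e := \dim U + 1 = \dim I$, and let $g : U \hookrightarrow X$ be the open embedding; since $I$ is a squarefree ideal, $I^+ = g_!\const_U$, so $H^j(X,I^+) = H_c^j(U;\kk)$ and $H_c^j(U_x,I^+|_{U_x}) = H_c^j(U\cap U_x;\kk)$. Plugged into Theorem~\ref{local cohomology}, these show $I$ is Cohen--Macaulay iff $H_c^j(U;\kk) = H_c^j(U\cap U_x;\kk) = 0$ for all $x\ne\hat 0$ and all $j\ne e-1$ (the vanishings for $j > e-1$ are automatic).

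The first vanishing is precisely condition~(i) of Definition~\ref{relative CM def}. For condition~(ii), Theorem~\ref{Verdier} together with the $(g_!,g^!)$-adjunction and $g^!\cDx = \cDu$ gives
\[
\TT^{-1}\DD(I)^+ \cong \RcHom(g_!\const_U,\cDx) \cong Rg_*\cDu.
\]
Since $Rg_*\cDu$ is supported on $\overline U$, condition~(ii)---$R^jh_*\cDu = 0$ for $j\ne 1-e$ with $h : U \hookrightarrow \overline U$---is equivalent to $\cH^k\DD(I)^+ = 0$ for $k\ne -e$. By local duality and Theorem~\ref{local cohomology}(b), the stalk of $\cH^k\DD(I)^+$ at $c(x)$ is $H_c^{-k-1}(U\cap U_x;\kk)^*$, recovering exactly the second module-side vanishing. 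This proves part~(1).

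For (2), I invoke the standard Ext criterion (Schenzel): $A$ is sequentially Cohen--Macaulay iff, for every $i$, the module $N_i := \Ext_T^{n-i}(A,\W_T)$ over $T := \operatorname{Sym} A_1$ is either zero or Cohen--Macaulay of dimension $i$. Local duality over $T$ combined with Theorem~\ref{main} identifies $N_i$ as an $A$-module with $H_\m^i(A)^\vee \cong \cH^{-i}(\cpx I_A) \in \Sq A$, and Theorem~\ref{Verdier} gives $N_i^+ \cong \cH^{-i+1}(\cDx)$, a topological invariant of $X$ and $\chara\kk$. Cohen--Macaulayness of an $A$-module over $T$ and over $A$ agree. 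Adapting the argument of~(1) to a general $M \in \Sq A$ shows that Cohen--Macaulayness of $M$ is determined by $M^+$, $M_{\b0}$, and the canonical comparison map $M_{\b0} \to H^0(X,M^+)$ from Theorem~\ref{local cohomology}(a). For $M = N_i$, $N_i^+$ is topological as above, and $N_{i,\b0} = \cH^{-i}((\cpx I_A)_{\b0}) \cong \tilde H_{i-1}(X;\kk)$ because $(\cpx I_A)_{\b0}$ is (up to reindexing) the augmented cellular chain complex of $\Gamma(P)$. Naturality of the sheafification functor ensures the comparison map is topological as well. Hence sequential Cohen--Macaulayness of $A$ depends only on $X$ and $\chara\kk$. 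The main obstacle is the general-$M$ analog of~(1), where handling the low-degree exact sequence of Theorem~\ref{local cohomology}(a) requires extra bookkeeping compared to the ideal case.
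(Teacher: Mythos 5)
Your proof of (1) is correct and is essentially an expanded, self-contained version of what the paper does in one line. The paper quotes two facts from \cite{Y05}, namely $\TT^{-1}(\DD(I)^+|_{\overline U}) \cong \mathrm{R}h_*\cDu$ and $[H^{-i}(\DD(I))]_{\b0} \cong H^{i-1}_c(U;\kk)$; you derive the first by combining Theorem~\ref{Verdier} with $I^+\cong g_!\underline{\kk}_U$ and the $(g_!,g^!)$-adjunction, and the second from Theorem~\ref{local cohomology}(a) after observing that $I\subsetneq\m$ forces $I_{\b0}=0$, which collapses the four-term exact sequence there into a single uniform isomorphism. Your reading-off of the stalks of $\cH^k\DD(I)^+$ along each cell $c(x)$ via Theorem~\ref{local cohomology}(b), and the translation $R^jh_*\cDu=0 \iff \cH^{j-1}(\DD(I)^+)=0$, matches what the paper leaves implicit. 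So for (1) you and the paper are doing the same thing.

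For (2) the paper's proof is literally ``follows from (1) by the same argument as \cite[Theorem~4.7]{Y08}'', so there is no detailed argument in the paper to compare against. Your route via Schenzel's Ext criterion ($A$ is sequentially Cohen--Macaulay iff each $N_i=\uExt^{n-i}_T(A,T(-\11))$ is zero or Cohen--Macaulay of dimension $i$) is the standard engine for such statements and is a legitimate plan. You have, however, correctly flagged the soft spot yourself, and it is a genuine gap as written: for a general $M\in\Sq A$, Cohen--Macaulayness is controlled by $M^+$, by $M_{\b0}$, \emph{and} by the comparison map $M_{\b0}\to H^0(X,M^+)$ of Theorem~\ref{local cohomology}(a), and the phrase ``naturality of the sheafification functor'' does not establish that, for $M=N_i$, this map is determined by the homeomorphism type of $X$. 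Naturality of $(-)^+$ is naturality with respect to morphisms in $\Sq{A_P}$ for a \emph{fixed} poset $P$, whereas assertion (2) compares different posets with homeomorphic realizations. To close the gap one should identify the map $N_{i,\b0}\cong \rH_{i-1}(X;\kk)\to H^0(X,\cH^{-i+1}(\cDx))\cong H^0(X,N_i^+)$ with (a shift and dual of) the edge homomorphism of the hypercohomology spectral sequence for $\cDx$, which is intrinsic to $X$; that identification, rather than generic naturality, is what is missing. It would also be worth stating explicitly why $\dim N_i$ itself (not merely the vanishing pattern of local cohomology) is topological, e.g.\ via $\dim N_i = 1+\dim\supp N_i^+$ when $N_i^+\ne 0$, with the degenerate case read off from $N_{i,\b0}$. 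Apart from these two points, the outline is sound and matches the spirit of the paper's citation.
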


\begin{proof}
(1) Set $U:= U_I$, and let $h: U \to \overline{U}$ be the embedding map. 
The assertion follows from the fact that 
$\TT^{-1} (\DD(I)^+ |_{\overline{U}}) \cong {\rm R}h_* \cDu$ and 
$[H^{-i}(\DD(I))]_{\b0} \cong H_c^{i-1}(U;\kk)$ by \cite{Y05} 
(see also \cite[Proposition~4.10]{Y03}). 

(2) Follows from (1) by the same argument as \cite[Theorem~4.7]{Y08}. 
\end{proof}

\begin{rem}
While it is not stated in \cite{OY}, the statements corresponding to 
Lemma~\ref{CM & Gor}, the equation \eqref{skeleton}
and Proposition~\ref{relative CM lem} hold for a cone-wise normal toric face ring. 
\end{rem}

As in \cite{Y05}, we regard the finite regular cell complex $\Sigma$ as a poset by 
$\sigma \geq \tau \stackrel{\text{def}}{\Longleftrightarrow} \overline{\sigma} \supset \tau$. 
Here we use the convention  that $\emptyset \in \Sigma$. 
We say $\Sigma$ is a {\it meet-semilattice} (or, satisfies the {\it intersection property}), 
if the largest common lower bound $\s \wedge \t \in \Sigma$ exists for all $\s,\t \in \Sigma$. 
It is easy to see that $\Sigma$ is a meet-semilattice if and only if $\# [\s \vee \t] \leq 1$ 
for all $\s, \t \in \Sigma$. 
The underlying cell complex of a toric face ring (especially,  
a simplicial complex) is a meet-semilattice.  

For $\s \in \Sigma$, let $U_\s$ be the open subset $\bigcup_{\tau \geq \sigma} \tau$ of 
$X(\Sigma)$.
As shown in \cite{Y05}, if $X(\Sigma)$ is Cohen-Macaulay and $\Sigma$ is a meet-semilattice, 
then $(\overline{U_\s}, \, \overline{U_\s} \setminus U_\s)$ is Cohen-Macaulay for all $\s$. 
(If $\Sigma$ is not a meet-semilattice, we have an easy counter example.)  
While a simplicial poset $P$ is {\it not} a meet-semilattice in general, 
the above fact remains true. 
We also remark that an indecomposable projective in $\Sq A$ is isomorphic to the ideal 
$J_x:=(t_x) \subset A$ for some $x \in P$.

\begin{prop}\label{relative CM}  
If $A$ is Cohen-Macaulay (resp. Buchsbaum) then the ideal $J_x := (t_x)$ is 
a Cohen-Macaulay module for all $x \in P$ (resp. for all $\hat{0} \ne x \in P$). 
\end{prop}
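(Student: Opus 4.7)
The plan is to apply Theorem~\ref{local cohomology} directly to the squarefree ideal $J_x$, reducing the Cohen--Macaulay property to the topological Cohen--Macaulay property of the links of non-minimal elements of $P$, which is then supplied by Theorem~\ref{CM & Gor}. The case $x=\hat{0}$ is trivial ($J_{\hat{0}}=A$), so assume $x\ne\hat{0}$. Then $J_x=(t_x)$ is the squarefree ideal with $\s(J_x)=\{z\in P\mid z\ge x\}$, so $(J_x)^+=j_!\,\const_{U_x}$ for the open embedding $j:U_x\embto X$, where $U_x=\bigcup_{z\ge x}c(z)$. Using compactness of $X$, the fact that $(J_x)_{\b0}=0$, and $(J_x)^+|_{U_y}\cong j'_!\,\const_{U_x\cap U_y}$, Theorem~\ref{local cohomology} gives
\[
[H^i_\m(J_x)]_{-\ua}\;\cong\;H^{i-1}_c(U_x\cap U_y;\kk)
\]
for every $\ua\in\M$ (where $y=\s(\ua)$, with the convention $U_{\hat{0}}=X$ for $\ua=\b0$). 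Since $\dim J_x=d$ (Buchsbaum forces $P$ to be pure), Cohen--Macaulayness of $J_x$ is equivalent to the vanishing of $H^j_c(U_x\cap U_y;\kk)$ for all $y\in P$ and all $j<d-1$.

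A key simplification is that $U_x\cap U_y=\coprod_{w\in[x\vee y]}U_w$ is a \emph{disjoint} union. Indeed, if distinct $w,w'\in[x\vee y]$ shared a common upper bound $z$, then $w\wedge w'$ computed in the boolean interval $[\hat{0},z]$ would be a common upper bound of $\{x,y\}$ strictly below $w$, contradicting minimality. Thus the question reduces to showing $H^j_c(U_w;\kk)=0$ for $j<d-1$ and every $w\ge x$; since $x\ne\hat{0}$, every such $w$ is non-minimal in $P$. The open star of a non-minimal $w$ is homeomorphic to $\RR^{\rho(w)-1}\times\opn{cone}^\circ(|L_w|)$, where $L_w$ is the link of $w$ in $X$ (a regular cell complex of dimension $d-\rho(w)-1$ realising a simplicial poset). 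K\"unneth for compactly supported cohomology then gives
\[
H^j_c(U_w;\kk)\;\cong\;\widetilde H^{j-\rho(w)}(|L_w|;\kk),
\]
so what must be shown is $\widetilde H^k(|L_w|;\kk)=0$ for $k<\dim|L_w|=d-\rho(w)-1$.

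This last statement is exactly the topological Cohen--Macaulayness of the link, encoded by Theorem~\ref{CM & Gor}(c): the Buchsbaum hypothesis gives $\cH^i(\cDx)=0$ for $i\ne-d+1$, and the standard Verdier-duality stalk computation on a regular cell complex identifies
\[
\cH^{-i}(\cDx)_p\;\cong\;\widetilde H_{i-\rho(w)}(|L_w|;\kk)\qquad(p\in c(w),\ w\ne\hat{0}),
\]
so concentration in cohomological degree $-d+1$ forces $\widetilde H_k(|L_w|;\kk)=0$ for $k\ne\dim|L_w|$, and over the field $\kk$ this gives the desired vanishing of $\widetilde H^k$. This settles the Buchsbaum case; the Cohen--Macaulay case is identical, except that $x=\hat{0}$ is also allowed, which is trivial. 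The main obstacle is the stalk formula $\cH^{-i}(\cDx)_p\cong\widetilde H_{i-\rho(w)}(|L_w|;\kk)$: it is a routine Verdier-duality computation on a regular cell complex, but one must check it carefully in the simplicial-poset setting where $|L_w|$ is in general only a simplicial poset and not a simplicial complex. Once this identification is in place, the proof is a bookkeeping of indices together with two applications of Theorem~\ref{CM & Gor}.
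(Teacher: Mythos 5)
Your proof is correct, and it turns on the same key combinatorial observation as the paper's --- that for $w\ge x,y$ there is a \emph{unique} $z\in[x\vee y]$ with $w\ge z$, which gives the disjoint decomposition of $\{w\ge x,y\}$ into the sets $\{w\ge z\}$, $z\in[x\vee y]$ --- but the execution is genuinely different in route and considerably longer. The paper stays on the algebraic side: it uses the identification $\RGm A\cong(\DD(A)^\vee)_{-\ua}\cong\cpx K_y$ (with $y=\s(\ua)$) from Corollary~\ref{reduced cohomology}, observes that the uniqueness lemma gives $(\DD(J_x)^\vee)_{-\ua}\cong\bigoplus_{z\in[x\vee y]}\cpx K_z$, and then reads off the vanishing of $H^i_\m(J_x)_{-\ua}$ for $i<d$ directly, because CM (resp.\ Buchsbaum) of $A$ means exactly that $H^i(\cpx K_z)=0$ for all $i<d$ and all $z$ (resp.\ all $z\ne\hat 0$), and every $z\in[x\vee y]$ satisfies $z\ge x\ne\hat 0$. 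That is a five-line argument with no topology. Your route instead translates into compactly supported sheaf cohomology on open stars via $(-)^+$, decomposes $U_x\cap U_y=\coprod_{w\in[x\vee y]}U_w$, applies a cone/K\"unneth computation to reduce $H^\bullet_c(U_w)$ to the reduced cohomology of the link $|L_w|$, and then invokes Theorem~\ref{CM & Gor}(c) through the Verdier stalk formula $\cH^{-i}(\cDx)_p\cong\widetilde H_{i-\rho(w)}(|L_w|;\kk)$. Each of these steps is true, but the stalk formula and the open-star product structure are exactly the pieces the paper deliberately avoids having to establish, and you rightly flag the stalk formula as the load-bearing and unverified step. In effect you are re-deriving, via the geometry of $X$, the vanishing $H^i(\cpx K_z)=0$ $(i<d,\ z\ne\hat 0)$ that the paper obtains for free from the definition of CM/Buchsbaum together with Corollary~\ref{reduced cohomology}. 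So: same core lemma, different (and heavier) machinery; your version buys a more explicitly topological picture of \emph{why} the statement holds, at the cost of two nontrivial auxiliary facts about regular cell complexes that the paper never needs to prove.
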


\begin{proof}
Let $\ua \in \M$ with $\s(\ua)=y$.
With the notation of Proposition~\ref{reduced cohomology}, recall that  
$${\rm R}\Gamma_\m A \cong (\DD(A)^\vee)_{-\ua} \cong \cpx K_y.$$ 
 Similarly, we have 
$${\rm R}\Gamma_\m J_x \cong 
(\DD(J_x)^\vee)_{-\ua} \cong \bigoplus_{z \in [x \vee y]} \cpx K_z.$$
To see the second isomorphism, note that if $w \geq x,y$ then there exists 
a {\it unique} $z \in [x \vee y]$ such that $w \geq z$.  

If $A$ is Cohen-Macaulay (resp. Buchsbaum) then 
$H^i_\m(A)_{-\ua} \cong H^i({\rm R}\Gamma_\m A)_{-\ua} = 0$ 
for all $i < d$ and all $\ua \in \M$ (resp. $\b0 \ne \ua \in \M$). Hence we are done. 
\end{proof}

Regarding $A=A_P$ as a $\ZZ$-graded ring, we have
$$\sum_{i \geq 0} (\dim_\kk A_i) \cdot \lambda^i 
= \frac{h_0 + h_1 \lambda + \cdots +h_d \lambda^d}{(1-\lambda)^d},$$ 
for some integers $h_0, h_1, \ldots, h_d$ by \cite[Proposition~3.8]{St}. 
We call $(h_0, \ldots, h_d)$ the {\it $h$-vector} of $P$. 
The behavior of the $h$-vectors of simplicial complexes is 
an important subject of combinatorial commutative algebra.  
The $h$-vector of a simplicial poset has also been studied, 
and striking results are given in \cite{St, Mas}.  

Recently, Murai and Terai gave nice results on the $h$-vector of a simplicial 
complex $\Delta$ such that the Stanley-Reisner ring $\kk[\Delta]$ satisfies 
Serre's condition $(S_r)$. We show that one of them also holds for 
simplicial posets. 

\begin{thm}[{c.f. Murai-Terai \cite[Theorem~1.1]{MT}}]\label{MT S_r}
Let $P$ be a simplicial poset with the $h$-vector $(h_0, h_1, \ldots, h_s)$. 
If $A$ satisfies Serre's condition $(S_r)$ then $h_i \geq 0$ for all $i \leq r$. 
\end{thm}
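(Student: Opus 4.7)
The plan is to follow the strategy of Murai and Terai \cite{MT}, using the Cohen--Macaulay skeleton $A^{(r-1)} := A_{P^{(r-1)}}$ as the main tool. After a flat base change we may assume $\kk$ is infinite, and we may assume $r \leq d$ (the case $r > d$ is trivial: then $(S_r)$ forces $A$ to be Cohen--Macaulay, and $h_i = 0$ for $i > d$). By $(S_r)$ we have $\depth A \geq r$, so by equation \eqref{skeleton} the ring $A^{(r-1)}$ is Cohen--Macaulay of dimension $r$, and by Stanley \cite{St91} its $h$-vector $(h^{(r-1)}_0, \ldots, h^{(r-1)}_r)$ is a non-negative $M$-sequence.

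Under the surjection $A \twoheadrightarrow A^{(r-1)}$ whose kernel $J := (t_x : \rho(x) > r)$ lives entirely in degrees $\geq r+1$, the Hilbert-series identity
\[
h(\lambda) - (1-\lambda)^{d-r}\,h^{(r-1)}(\lambda) = (1-\lambda)^d\,F(J,\lambda)
\]
shows that $h(\lambda) \equiv (1-\lambda)^{d-r}\,h^{(r-1)}(\lambda) \pmod{\lambda^{r+1}}$, and hence for $0 \leq k \leq r$,
\[
h_k = \sum_{i=0}^k (-1)^{k-i}\binom{d-r}{k-i}\,h^{(r-1)}_i.
\]
Thus the non-negativity of $h_k$ for $k \leq r$ reduces to this alternating binomial identity over the $M$-sequence $h^{(r-1)}$.

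The final numerical step is the delicate part. The sum above is \emph{not} non-negative for arbitrary $M$-sequences (for instance, $h^{(r-1)} = (1,2,1)$ with $d-r = 1$ yields $-1$ at $k = 2$), so one must exploit the additional combinatorial constraints imposed on $P^{(r-1)}$ by the existence of rank-$> r$ elements in $P$ (these force the existence of certain higher-rank common upper bounds in the skeleton). In the Stanley--Reisner setting \cite[Thm.~1.1]{MT}, the authors dispose of this by an Alexander-duality argument reducing the inequality to non-negativity of $f$-vectors of links. The same strategy transfers here by combining the squarefree $T$-module structure on $A$ (Lemma~\ref{sqf/T}) with the topological description of local cohomology from Corollary~\ref{reduced cohomology}, which interprets the alternating binomial sum as an Euler characteristic of compactly supported cohomology of the open subspaces $U_x \subset X$.

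The main obstacle will be this verification of the numerical inequality: although the overall framework of \cite{MT} transfers to the simplicial-poset case without essential change, the combinatorial bookkeeping --- in particular the replacement of simplicial Alexander duality by its cell-complex analogue for $\Gamma(P)$, and the use of the $\M$-grading rather than a $\ZZ^n$-grading --- must be carried out carefully.
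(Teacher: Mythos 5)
Your argument diverges from the paper's and contains a gap that you yourself acknowledge. The paper does not pass through the $(r-1)$-skeleton at all. It observes that $A$ is a squarefree module over the polynomial ring $T = \operatorname{Sym} A_1$ (Lemma~\ref{sqf/T}), and then notes that the entire Murai--Terai argument transfers once a single claim is established: that $\uExt_T^i(A, T(-\11))$ is a squarefree $T$-module of dimension at most $n-i-r$. The squarefree-ness is immediate from Lemma~\ref{sqf/T} and \cite[Theorem~2.6]{Y}; the dimension bound follows from local duality, $\uExt_T^{i}(A, T(-\11)) \cong H^{-n+i}(\cpx I_A)$, plus Lemma~\ref{S_r iikae}, which is precisely the reformulation of $(S_r)$ in terms of the cohomology of $\cpx I_A$. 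No skeleton is involved.

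Your route starts from the true fact that $\depth A \geq r$ makes $A^{(r-1)}$ Cohen--Macaulay, and the Hilbert-series identity you write down is correct since $\ker(A \twoheadrightarrow A^{(r-1)})$ is concentrated in degrees $\geq r+1$. But, as your own $(1,2,1)$ example shows, the alternating binomial sum $\sum_{i=0}^k (-1)^{k-i}\binom{d-r}{k-i}h^{(r-1)}_i$ need not be nonnegative, so the argument cannot close with what you have extracted. The root problem is that your reduction uses only $\depth A \geq r$, which is strictly weaker than $(S_r)$: the Serre condition bounds $\depth A_\p$ at every prime, not just at $\m$, and none of that extra strength enters the skeleton computation. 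The final paragraph invoking Alexander duality, the $\M$-grading, and Euler characteristics of compactly supported cohomology does not identify how these would control the signs in the alternating sum; it is an aspiration rather than an argument. The missing ingredient is exactly the dimension bound on $\uExt_T^i(A, T(-\11))$ supplied by Lemma~\ref{S_r iikae}, which is where $(S_r)$ really enters and which the paper feeds directly into the unchanged Murai--Terai machinery rather than trying to localize the argument on the skeleton.
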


\begin{proof}
By virtue of Lemma~\ref{sqf/T}, 
we can imitate the proof of \cite[Theorem~1.1]{MT}. 

Let $\Delta$ be a simplicial complex with the vertex set $[n]$, 
$S= \kk[x_1, \ldots, x_n]$ the polynomial ring, and $\kk[\Delta]=S/I_\Delta$ 
the Stanley-Reisner ring of $\Delta$.  
To prove our theorem, we replace $\kk[\Delta]$ and $S$ 
in their argument  by $A=A_P$ and $T= \Sym A_1$ respectively. 
In the former half of the proof, they deal $\kk[\Delta]$ as just a      
finitely generated (graded) $S$-module, 
and the argument is clearly applicable to our $A$ and $T$. 
The latter  half of their proof is based on the fact that 
$\uExt_S^i(\kk[\Delta], S(-\11))$ is a squarefree $S$-module of dimension at most $n-i-r$. 
Hence if  the following holds, the argument in \cite{MT} works in our case. 

\medskip

\noindent {\bf Claim.} $\uExt_T^i(A, T(-\11))$ is a squarefree $T$-module of 
dimension at most $n-i-r$. 

\medskip

The proof is easy.  In fact,  the squarefree-ness follows from Lemma~\ref{sqf/T} and \cite[Theorem~2.6]{Y}. 
Moreover, since $ \uExt_T^{i}(A, T(-\11)) \cong \uExt_A^{-n+i}(A, \cpx{\D*}_A) \cong H^{-n+i}(\cpx I_A)$ 
by the local duality,  we have $\uExt_T^i(A, T(-\11)) \leq n-i-r$ by Lemma~\ref{S_r iikae}. 
\end{proof}

\end{document}